\documentclass[11pt]{amsart}

\usepackage[T1]{fontenc}
\usepackage[utf8]{inputenc}
\usepackage{lmodern}
\usepackage{amsmath,amssymb,amsthm,mathtools}
\usepackage{microtype}
\usepackage[colorlinks=true,linkcolor=blue,citecolor=blue,urlcolor=blue]{hyperref}
\usepackage[nameinlink,capitalise,noabbrev]{cleveref}

\usepackage{tikz}
\usepackage{tkz-tab}
\usepackage{tkz-fct} 

\allowdisplaybreaks
\numberwithin{equation}{section}

\newtheorem{theorem}{Theorem}[section]
\newtheorem{proposition}[theorem]{Proposition}
\newtheorem{lemma}[theorem]{Lemma}
\newtheorem{corollary}[theorem]{Corollary}
\theoremstyle{definition}
\newtheorem{definition}[theorem]{Definition}
\theoremstyle{remark}

\DeclareMathOperator{\dist}{dist}
\DeclareMathOperator{\sech}{sech}
\newcommand{\N}{\mathbb{N}}
\newcommand{\R}{\mathbb{R}}
\newcommand{\C}{\mathbb{C}}
\newcommand{\G}{\mathcal{G}}
\newcommand{\E}{\mathcal{E}}
\newcommand{\Ncal}{\mathcal{N}}
\newcommand{\dd}{\,\mathrm{d}}
\newcommand{\eps}{\varepsilon}
\newcommand{\norm}[1]{\lVert #1\rVert}
\newcommand{\abs}[1]{\lvert #1\rvert}

\title[Asymmetric minimal-mass blow-up]{Asymmetric minimal mass blow-up for the quintic NLS on a star graph}

\author{Mattia Buratti}
\address{Dipartimento di Matematica, Sapienza Universit\`a di Roma,
Piazzale Aldo Moro 5, I-00185 Roma, Italy}
\email{mattia.buratti@uniroma1.it}
\author{Fran\c{c}ois Genoud}
\address{\'Ecole Polytechnique F\'ed\'erale de Lausanne,
EPFL Station 4, 1015 Lausanne, Switzerland}
\email{francois.genoud@epfl.ch}
\author{Vladimir Georgiev}
\address{Dipartimento di Matematica, Universit\`a degli Studi di Pisa,
Largo Bruno Pontecorvo 5, 56127 Pisa, Italy}
\address{Faculty of Science and Engineering, Waseda University,
3-4-1 Okubo, Shinjuku-ku, Tokyo 169-8555, Japan}
\address{Institute of Mathematics and Informatics,
Bulgarian Academy of Sciences,
Acad. Georgi Bonchev Str., Block 8, 1113 Sofia, Bulgaria}
\email{vladimir.simeonov.gueorguiev@unipi.it}

\subjclass[2020]{35Q55, 35R02, 35B44}

\keywords{Nonlinear Schr\"odinger equations; star graphs; minimal-mass blow-up;
persistence of regularity; non-radial solutions}

\begin{document}

\begin{abstract}
This work is concerned with
the focusing quintic nonlinear Schr\"odinger equation on a
star graph with two edges, with a delta interaction of arbitrary strength at the
vertex. We construct a non-radial, minimal mass, finite time blow-up
solution, whose singularity forms at an arbitrarily prescribed interior point of one edge.
Our approach is based on a suitable cut-off of the standard pseudoconformal 
blow-up solution of the focusing quintic NLS on the line, inspired 
by Ogawa-Tsutsumi \cite{Ogawa90} and Burq-G\'erard-Tzvetkov \cite{burq2003singular}. 
A noteworthy feature of our 
construction is the persistence of the domain of the Hamiltonian under the flow,
obtained by the persistence of regularity method of Kato \cite{Kato1987}.
We also identify a two-parameter family of distinct blow-up profiles, which should play
a central role in the classification of non-radial minimal mass blow-up
solutions on the star graph.
Our construction is generic and can be extended to a star graph with 
an arbitrary number of edges. It can certainly also be adapted to other types of metric graphs.
\end{abstract}

\maketitle

\section{Introduction}

The focusing nonlinear Schr\"odinger equation (NLS) combines dispersive spreading
with nonlinear concentration, giving rise to both standing wave solutions and
finite time blow-up solutions. In this article, we continue the study and construction
of finite time blow-up solutions of the focusing quintic NLS on star graphs, 
initiated in \cite{genoud2024blowup}. NLS equations on metric graphs have
attracted considerable attention in recent years. To the best of our knowledge, however, 
blow-up solutions on graphs remain largely uncharted territory.
We refer to the introduction in 
\cite{genoud2024blowup} for an extensive discussion of the literature on blow-up
solutions of NLS equations and on other aspects of equations on metric graphs,
including standing wave solutions, variational structures, and stability.

Since a star graph is a one-dimensional structure, it is useful to recall
briefly some well-known results about the focusing quintic NLS on the line before
turning to the graph setting.
The focusing quintic NLS on the line,
\begin{equation}\label{eq:line-nls-intro}
 i\partial_t w+\partial_x^2w+\abs{w}^4w=0,
 \qquad w=w(t,x):\R\times\R \to \C,
\end{equation}
is $L^2$-critical, i.e.~the scaling symmetry of the equation,
\[
 w(t,x)\mapsto\lambda^{1/2}w(\lambda^2t,\lambda x),
 \qquad \lambda>0.
\]
preserves the $L^2$-norm. It has been known since 
Weinstein's seminal paper \cite{Weinstein1983} that this criticality gives rise to a 
mass threshold for the global existence of $H^1$ solutions. We define the mass
of $w$ as 
\[
M_\R(w(t))=\frac{1}{2}\|w(t,\cdot)\|_{L^2(\R)}^2,
\]
which is independent of $t$ if $w$ is a solution of \eqref{eq:line-nls-intro}.

It is clear by substitution that \eqref{eq:line-nls-intro} admits as a global solution 
the standing wave $w(t,x)=e^{i t}{Q}(x)$, where ${Q}$ is the unique 
positive even solution of the stationary equation
\begin{equation}\label{eq:ground-state-intro}
 {Q}''-{Q}+{Q}^5=0.
\end{equation}
This equation can be solved explicitly, yielding
\begin{equation} \label{eq:Q-explicit}
    {Q}(x)=3^{\frac14}\sech^{\frac12}(2x), \quad x\in\R.
\end{equation}

Every $H^1$ solution $w$ of \eqref{eq:line-nls-intro} whose initial datum 
$w_0=w(0,\cdot)$ satisfies
\[
M_\R(w_0)<M_\R({Q})
\]
is global in time. Weinstein proved this criterion for global existence
in \cite{Weinstein1983}, in fact for the $L^2$-critical focusing
NLS on $\R^N$ with any $N\geq1$. Furthermore, the criterion is sharp: there exists a 
minimal mass  blow-up solution, i.e.~a solution of~\eqref{eq:line-nls-intro} with mass 
$M_\R({Q})$ which blows up in finite time. It turns out this solution can be found 
explicitly as a pseudoconformal transform of the ground state standing wave $e^{i t}{Q}(x)$. 
Indeed, let
\begin{equation}\label{eq:PC}
S(t,x)=\frac{1}{\sqrt{|t|}} {Q}\Big(\frac{x}{|t|}\Big)e^{-i\frac{|x|^2}{4|t|}}e^{\frac{i}{|t|}}.
\end{equation}
Then $S$ is a solution of~\eqref{eq:line-nls-intro} defined for $t\in(-\infty,0)$ and we have
\begin{equation*}\label{eq:PC_speed}
\norm{S(t)}_{L^2(\R)}=\norm{Q}_{L^2(\R)},
\quad \norm{\partial_x S(t)}_{L^2(\R)}\mathop{\mbox{$\sim$}}\limits_{t \uparrow 0}\frac{C}{|t|}.
\end{equation*}
In particular, $S$ blows up at $t=0$ with the so-called \emph{pseudoconformal speed} 
$|t|^{-1}$. Furthermore, it was proved by Merle \cite{Merle1993} that, 
up to the symmetries of the equation, $S$ is the unique minimal mass blow-up solution 
of \eqref{eq:line-nls-intro}.

The above setting of the NLS on the line will henceforth be referred 
to as {\em Euclidean}.

Let us now describe the situation on the star graph, and present our main result.
As depicted in Figure~\ref{fig:star_graph}, a star graph $\G$ consists of a finite 
number $N\in\N$ of edges (half-lines) all connected at their common origin, which is the 
vertex of the graph. A function on the graph is denoted as $u=(u_1,\dots,u_N):\G\to\C$, 
where $u_j$ is defined on the $j$-th edge. Derivatives of each component can then be 
considered on the corresponding edge and the one-sided derivative at the vertex, 
$u_j'(0^+)$, is simply denoted by $u_j'(0)$. 

For $\alpha\in\R$,
we define the self-adjoint {\em Hamiltonian} 
$H_\alpha: D(H_\alpha)\subset L^2(\G) \to L^2(\G)$ by its action
\[
(H_\alpha u)_j:=-u_j''
\]
on each edge, subject to the continuity condition that all components have a common value
$u(0)$ at the vertex, together with the $\delta$ {\em interaction} condition
\begin{equation}\label{eq:delta_interaction-intro}
\sum_{j=1}^N u_j'(0) = \alpha u(0).
\end{equation}

The present work is devoted to the equation
\begin{equation}\label{eq:nls-intro}
 i\partial_tu-H_\alpha u+\abs{u}^4u=0, \qquad u=u(t,x):\R\times\G\to\C.
\end{equation}
The main conserved quantities associated with \eqref{eq:nls-intro} are the energy 
\begin{equation}
\label{eq:def_E}
E_\alpha(u):=\frac12 \norm{u_{ x}}_{L^2(\mathcal G)}^2+\frac\alpha 2| u(0)|^2
-\frac16\| u\|_{L^6(\mathcal G)}^6
\end{equation}
and the mass
\begin{equation}
\label{eq:def_M}
M(u):=\frac12\| u\|_{L^2(\mathcal G)}^2.
\end{equation}
The exact description of $\mathcal G$, the precise interpretation of~\eqref{eq:nls-intro},
and the definition of the usual function spaces on $\mathcal G$ will be given in 
Section~\ref{sec:setting}.

It has been shown in \cite{genoud2024blowup} that the Euclidean mass threshold 
$M_\R({Q})$ also determines the mass threshold for global existence of $H^1$
solutions of~\eqref{eq:nls-intro}. More precisely, if the initial datum $u_0=u(0,\cdot)$ verifies
\begin{equation}\label{eq:mass_threshold}
M(u_0)<\min\big\{\textstyle\frac N2,1\big\} M_\R({Q}),
\end{equation}
then the corresponding solution of~\eqref{eq:nls-intro} is global.

A function on $\G$ is called {\em radial} if all its components are equal, that is, 
if they all coincide with a single function $u:[0,\infty)\to\C$.
Let $Q_\G$ be the radial function on $\G$ consisting 
of a half-copy of ${Q}$ on each edge; that is, 
\[
(Q_\G)_j(x):= Q (x), \qquad x\geq0, \quad j=1,\dots,N.
\]
Then 
\begin{equation}\label{eq:min_mass}
M(Q_\G)=\textstyle\frac N2 M_\R({Q})
\end{equation}
and it was shown in \cite{genoud2024blowup} that an $H^1$ {\em radial solution} of 
\eqref{eq:nls-intro} is global if 
\begin{equation}\label{eq:mass_threshold_radial}
M(u_0)<M(Q_\G).
\end{equation}
Furthermore, in the attractive case $\alpha<0$, 
\cite{genoud2024blowup} constructs a radial finite time blow-up solution of \eqref{eq:nls-intro} 
with mass $M(Q_\G)$, showing that the mass threshold for global existence of $H^1$
radial solutions is sharp, as in the Euclidean setting.

For $N\geq 3$, the mass threshold \eqref{eq:mass_threshold} suggests the existence of
finite time blow-up solutions of \eqref{eq:nls-intro} with a mass as small as $M_\R({Q})$, which
in this case is strictly smaller than the radial threshold \eqref{eq:min_mass}. Thus,
such solutions would not be radial. Our main result, Theorem~\ref{thm:main}, 
shows that, indeed, there exists an asymmetric finite time blow-up solution with mass 
$M(u_0)=M_\R({Q})$. In view of criterion \eqref{eq:mass_threshold} for global existence,
this solution has minimal mass among blow-up solutions of \eqref{eq:nls-intro}.
Stripped of its technicalities, our theorem can be briefly formulated as follows.
\begin{theorem}\label[theorem]{thm:main-intro}
There exists a time $T>0$ and a non-radial solution 
\[
 u\in C([0,T);D(H_\alpha)) \cap C^1([0,T);L^2(\G))
\]
of \eqref{eq:nls-intro}, with mass $M(u)=M_\R({Q})$, 
which blows up at time $T$ with pseudoconformal speed:
\[
\|\partial_x u(t)\|_{L^2(\G)}\sim \displaystyle \frac{C}{T-t} \qquad(t\uparrow T).
\]
Furthermore, $u(t,x)$ concentrates all of its mass at a point $a>0$ located on edge number 
$1$, where it blows up with the pseudoconformal profile \eqref{eq:PC}. 
\end{theorem}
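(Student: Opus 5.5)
We construct $u$ backward from the blow-up time, in the spirit of Ogawa--Tsutsumi and Burq--G\'erard--Tzvetkov. Fix $a>0$ and a smooth cut-off $\chi$ on $\R$ with $\chi\equiv1$ near $a$ and $\operatorname{supp}\chi\subset(a/2,\infty)$. Set $S_a(t,x)=S(t,x-a)$ with $S$ from \eqref{eq:PC}, shifted so that the singularity sits at $x=a$ on edge $1$, and define the approximate solution $v(t)=(\chi S_a(t),0,\dots,0)$ on $\G$ for $t<0$. Because $v$ vanishes near the vertex, it lies in $D(H_\alpha)$ and the vertex conditions, including the $\delta$ interaction \eqref{eq:delta_interaction-intro}, are trivially satisfied for every $\alpha$. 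The error
\[
R:=i\partial_t v-H_\alpha v+|v|^4v=\bigl(-2\chi'\partial_xS_a-\chi''S_a+(\chi^5-\chi)|S_a|^4S_a,0,\dots,0\bigr)
\]
is supported where $|x-a|\ge c>0$. On that region $|S_a|\lesssim |t|^{-1/2}e^{-c/|t|}$, coming from the exponential decay of $Q$ at scale $|t|$, and each derivative costs at most a factor $|t|^{-2}$. Hence $\|R(t)\|_{H^2}\lesssim e^{-c'/|t|}$ as $t\uparrow0$: the error is flat in every Sobolev norm.

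Next we look for $u=v+\varepsilon$ with $\varepsilon(t)\to0$ as $t\uparrow0$. The remainder solves
\[
i\partial_t\varepsilon-H_\alpha\varepsilon+\bigl(|v+\varepsilon|^4(v+\varepsilon)-|v|^4v\bigr)+R=0,\qquad \varepsilon(0)=0.
\]
We solve this by a fixed point argument backward in time on $[t_0,0)$, using the Duhamel formula with the unitary group $e^{-itH_\alpha}$. The key step is to work in a space where the norm of $\varepsilon$ is weighted by $e^{c''/|t|}$. Take the norm $\sup_t e^{c''/|t|}\bigl(\|\varepsilon\|_{L^2}+|t|^{2}\|\varepsilon\|_{H^1}\bigr)$, or the analogue with $L^\infty_tL^2$ plus a Strichartz norm on $\G$. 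The linearised terms $|v|^4\varepsilon$ have size $\|v\|_{L^\infty}^4\sim|t|^{-2}$. This polynomial loss is absorbed because the exponential weight $e^{c''/|t|}$ has time derivative $c''|t|^{-2}e^{c''/|t|}$, which dominates it; equivalently, one runs an energy/Gronwall estimate on $e^{\lambda/|t|}$-weighted quantities with $\lambda<c'$ large enough. This is the main obstacle: closing the contraction with exponentially small smallness against the polynomially singular potential. I would first attempt it via $L^2$ and $H^1$ energy estimates for $\varepsilon$, in the manner of the Merle / Raphaël--Szeftel style bootstrap. Here the $H^1$ energy identity includes the boundary term $\frac\alpha2|\varepsilon(0)|^2$, which is controlled by the Gagliardo--Nirenberg inequality on $\G$.

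To obtain $u\in C([t_0,0);D(H_\alpha))\cap C^1([t_0,0);L^2)$, I would apply Kato's persistence of regularity. Local well-posedness holds in $D(H_\alpha)$, with a blow-up alternative controlled by the $H^1$ norm. Differentiating in time, $\partial_t\varepsilon$ satisfies a linear equation with source $\partial_tR$, which is also exponentially small, so the same weighted estimates give $\partial_t\varepsilon\in L^2$. The equation then yields $H_\alpha\varepsilon\in L^2$, i.e.\ $\varepsilon\in D(H_\alpha)$.

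Finally, we translate time by $t\mapsto t-t_0$ and set $T=-t_0$. Mass conservation gives $M(u)=\lim_{t\uparrow0}M(v(t))$. Since $\chi S_a$ concentrates at $a$ with $\chi\equiv1$ there, this limit equals $M_\R(Q)$. The expansion $\|\partial_xu\|_{L^2}=\|\partial_xS(t)\|_{L^2}+o(1)\sim C/|t|$ follows from the $H^1$ smallness of $\varepsilon$. The solution is non-radial because it concentrates on edge $1$ only.
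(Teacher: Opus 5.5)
Your overall architecture (cut-off pseudoconformal profile, exponentially flat error, backward contraction in an exponentially weighted space, then Kato-type persistence) is the paper's. However, the step you flag as the main obstacle is not resolved.

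With the unscaled profile $S$ you have $\norm{v(t)}_\infty^4=3\abs{t}^{-2}$. The linear term $\abs{v}^4\varepsilon$ then enters the weighted Duhamel bound through
\[
 e^{c''/\abs{t}}\int_t^0\abs{s}^{-2}e^{-c''/\abs{s}}\dd s=\frac1{c''}
\]
exactly, for every $t<0$. So shrinking $[t_0,0)$ gives no smallness, and the linear part of your map has Lipschitz constant of order $\norm{Q}_\infty^4/c''$. The term $\abs{v}^4\partial_x\varepsilon$ in your $\abs{t}^2$-weighted $H^1$ component is borderline in the same way.

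The weight's logarithmic derivative $c''\abs{t}^{-2}$ and the potential have the same power. So ``dominates'' can only mean that $c''$ exceeds a fixed numerical threshold. You also need $c''\le c'$, and $c'$ is capped by the distance from $a$ to the set where $\chi$ varies (since $Q(y)\sim e^{-\abs{y}}$). For your $\chi$ that distance is less than $a/2$. For a fixed moderate $a$ there is therefore no admissible $c''$, and nothing in your argument enlarges $c'$. The same obstruction reappears in your weighted estimate for $\partial_t\varepsilon$.

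The missing idea is a small parameter. The paper concentrates at scale $\mu=\lambda(T_\lambda-t)$ with $T_\lambda=\kappa/\lambda^2$, i.e.\ it uses $S_{a,T_\lambda,\lambda,0,0}$. The potential is then $\mu^{-2}$ while $\dd s=-\dd\mu/\lambda$, so the borderline integral equals $(\lambda c)^{-1}$. This is \eqref{eq:weighted-integral}; it yields the contraction constant $C/\lambda$ in \eqref{eq:Phi-contraction}, and it reappears as the condition $\gamma\lambda\ge 2C$ in \cref{prop:time-derivative}.

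The critical scaling $U\mapsto\lambda^{1/2}U(\lambda^2t,\lambda x)$ maps an unscaled construction at the far point $\lambda a$ (cut-off transition at distance $\sim\lambda a$, coupling $\alpha/\lambda$) onto the paper's construction. Since the introductory statement only asks for some $a$, you could repair your argument by taking $a$ large in this way, but that choice has to be built in explicitly; it is precisely what makes the scheme close.

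Your persistence step is also only formal:
\begin{itemize}
\item Differentiating the equation presupposes $\partial_t\varepsilon\in L^2$, which is what you are trying to prove.
\item No terminal value of $\partial_t\varepsilon$ is available at the blow-up time.
\item The linearized source contains $(DF(v+\varepsilon)-DF(v))\partial_tv$, not just $\partial_tR$.
\end{itemize}
The paper makes this rigorous with strong approximants $u_n(t_n)=R_\lambda(t_n)$. It uses difference quotients whose terminal value $-v_{\lambda,n}(t_n-h)/h$ is bounded via Duhamel, then uniform weighted energy bounds, and finally weak closedness of the graph of $H_\alpha$.
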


A common feature of \eqref{eq:nls-intro} and the Euclidean equation \eqref{eq:line-nls-intro} is
the one-dimensional nature of the problem, and the related $L^2$-criticality. But \eqref{eq:nls-intro}
differs substantially from the Euclidean case, in that several symmetries are broken. Namely,
translations, Galilean transformations, and pseudoconformal transformations 
do not preserve a fixed vertex condition,
while critical scaling changes the strength of the delta coupling. Thus, the explicit Euclidean 
pseudoconformal blow-up solution \eqref{eq:PC} cannot be transferred directly to the star graph.
Nevertheless, the main idea is that, as $t\uparrow T$, a copy of $S(t,x)$ centered at a point $a$ 
on one edge should asymptotically not be affected by the vertex condition, 
thanks to its exponential decay away from $a$.
We will thus carry out a perturbative construction using the pseudoconformal profile $S(t,x)$.
This line of attack is inspired by \cite{Ogawa90} and \cite{burq2003singular}, 
where critical NLS equations were considered on the line with periodic boundary 
conditions and in bounded domains of $\R^2$, respectively.

For simplicity, we prove our main result for a graph with two edges. The construction
is general and extends to an arbitrary finite number of edges with only minor modifications. 
Due to its perturbative nature, we believe
it is quite robust and could also be used for other types of graphs. 

We conclude this introduction with a short outline of our proof. 
Following \cite{Ogawa90} and \cite{burq2003singular}, we center a rescaled version of the Euclidean
pseudoconformal profile \eqref{eq:PC} at an interior point \(a>0\) of the first edge 
and multiply it by a cut-off function which vanishes near the vertex. This allows us to construct
an exact solution of \eqref{eq:nls-intro} backward in time from $t=T$ by applying the contraction
mapping principle to a final-time Duhamel map. This produces a solution behaving at leading order
on the first edge like the Euclidean pseudoconformal blow-up profile centered at point $a$. 
In the limit $t\uparrow T$, the convergence to the Euclidean profile, together with the conservation 
of the mass, ensures that our solution has mass $M_\R({Q})$. 

Since the vertex condition \eqref{eq:delta_interaction-intro} is not preserved by nonlinear
functions of $u$, 
it does not appear possible to perform the fixed point argument directly in 
$D(H_\alpha)$, which is a subspace of $H^2(\G)$. Instead, we apply the contraction mapping
principle in the form domain of $H_\alpha$, namely, $H^1(\G)$ with the continuity condition at the vertex, 
endowed with a suitable norm. The persistence
of $D(H_\alpha)$ under the flow of our solution is proved afterwards, using a functional-analytic approach
going back to Kato \cite{Kato1987}. 

The paper is organized as follows. In \cref{sec:setting}, we define
the graph and the associated functional setting, and state
our main result in full details. \Cref{sec:preliminaries} collects useful results about the
Hamiltonian $H_\alpha$ and the Euclidean pseudoconformal profile; it also introduces the cut-off profile 
and its error term.
\cref{sec:construction} gives the fixed point argument and properties of the solution. 
The persistence of $D(H_\alpha)$ under the flow is proved in \cref{sec:regularity}.
Finally, \cref{sec:terminal-family} identifies a two-parameter family of distinct blow-up profiles,
which provides a first step toward the classification of non-radial minimal mass blow-up solutions 
on the star graph.

\bigskip


\section{Setting and main result}\label{sec:setting}

A \emph{metric star graph} with $N$ edges consists of
a vertex $0$ to which are attacheded $N$ edges $e_1,\dots,e_N$ of infinite length. 
We identify each edge $e_j$ with the half-line $[0,\infty)$, 
the left endpoint $0$ corresponding to the vertex. 
A schematic representation of a star graph is given in Figure~\ref{fig:star_graph}. 

\begin{figure}[htbp!]
  \centering
\begin{tikzpicture}[scale=0.030]
  \node at (0,0) {$\bullet$};
  \node at (10,-8) {$0$};
    \foreach \i in {1,2,3,4,5}{
  \draw[rotate=72*\i] (0,0) -- (50,0) node[anchor={180+72*\i}] {};
    \draw[rotate=72*\i,dotted] (50,0) -- (70,0) node[anchor={180+72*\i}] {$\infty$};
  }
  \end{tikzpicture}    
  \caption{A star graph with $5$ edges.}
  \label{fig:star_graph}
\end{figure}

For simplicity, we carry out the construction of a blow-up solutions for a star graph with two edges, 
denoted by $\G$. Our proof extends to an arbitrary finite number of edges with only minor
modifications.

A function on $\G$ is written \(u=(u_1,u_2)\), where $u_1$ and $u_2$ are
functions defined on $[0,\infty)$, with values in $\C$.
For \(1\le p<\infty\) and $m\in\N$, the standard Lebesgue and Sobolev spaces on $\G$ are 
defined by
\[
 \norm{u}_{L^p(\G)}^p
 :=\sum_{j=1}^2\norm{u_j}_{L^p(\R_+)}^p,
 \qquad
 H^m(\G):=H^m(\R_+)\oplus H^m(\R_+).
\]
Derivatives at the vertex are the outward derivatives $u_1'(0^+)$ 
and $u_2'(0^+)$ and will simply be denoted by $u_1'(0)$ and $u_2'(0)$.

We will work in the energy space
\begin{equation}\label{eq:energy-space}
 \E(\G):=\{u\in H^1(\G):u_1(0)=u_2(0)\}.
\end{equation}
For \(\alpha\in\R\), we define an operator 
$H_\alpha: D(H_\alpha)\subset L^2(\G) \to L^2(\G)$ by
\begin{align}
 (H_\alpha u)_j&=-u_j'',\label{eq:H-action}\\
 D(H_\alpha)&=\left\{u\in H^2(\G):
 u_1(0)=u_2(0),\
 u_1'(0)+u_2'(0)=\alpha u_1(0)\right\}.
 \label{eq:H-domain}
\end{align}
The operator $H_\alpha$ is self-adjoint and the corresponding closed 
quadratic form is given by
\begin{equation}\label{eq:quadratic-form}
 q_\alpha(u)=\sum_{j=1}^2\int_0^\infty\abs{u_j'(x)}^2\dd x
 +\alpha\abs{u(0)}^2,
 \qquad D(q_\alpha)=\E(\G).
\end{equation}
The metric graph $\G$, endowed with the Hamiltonian $H_\alpha$,
is an example of a {\em quantum graph} (see e.g.~\cite{BerkKuch2013,Pava2019}).

We consider
\begin{equation}\label{eq:nls}
 i\partial_tu-H_\alpha u+F(u)=0,
 \qquad F(z):=\abs{z}^4z,
\end{equation}
where \(F\) acts componentwise. Two important conserved quantities associated 
with \eqref{eq:nls} are the mass and energy, respectively defined by
\begin{align}
 M(u)&:=\frac12\norm{u}_{L^2(\G)}^2,\label{eq:mass}\\
 E_\alpha(u)&:=\frac12q_\alpha(u)-\frac16\norm{u}_{L^6(\G)}^6.
 \label{eq:energy}
\end{align}

\begin{definition}\label[definition]{def:solutions}
Let \(I\subset\R\) be an interval.  A {\em mild solution} of \eqref{eq:nls} is a function
\(u\in C(I;\E(\G))\) such that, for \(t,t_0\in I\),
\begin{equation}\label{eq:duhamel-solution}
 u(t)=S_\alpha(t-t_0)u(t_0)
 +i\int_{t_0}^t S_\alpha(t-s)F(u(s))\dd s,
 \qquad S_\alpha(t):=e^{-itH_\alpha}.
\end{equation}
It is a {\em strong solution} if
\[
 u\in C(I;D(H_\alpha))\cap C^1(I;L^2(\G)),
\]
where \(D(H_\alpha)\) carries the graph norm
\(\norm f_{D(H_\alpha)}:=\norm f_2+\norm{H_\alpha f}_2\).
A solution {\em blows up} at \(T<\infty\) if its \(H^1(\G)\)-norm is
unbounded as \(t\uparrow T\).  Its {\em blow-up set} consists of the points in $\G$ 
for which no neighborhood has a uniform local \(H^1\)-bound.
\end{definition}

Recall that ${Q}$ is the unique positive even solution of \eqref{eq:ground-state-intro},
and $M_\R({Q})$ its mass. Our main result is the following theorem.

\begin{theorem}
\label[theorem]{thm:main}
Fix \(\alpha\in\R\) and \(a>0\) on the first edge.  There exist
\(\kappa>0\), \(\lambda_0\ge1\) and \(C,\delta>0\) such that, for every
\(\lambda\ge\lambda_0\), \eqref{eq:nls} has a strong solution
\[
 u_\lambda\in C([0,T_\lambda);D(H_\alpha))
 \cap C^1([0,T_\lambda);L^2(\G)),
 \qquad T_\lambda:=\frac{\kappa}{\lambda^2}.
\]
Let \(\mu(t):=\lambda(T_\lambda-t)\).  There is a real-valued
\(\chi\in C_c^\infty(0,\infty)\), which is equal to one in a neighborhood of \(a\), 
such that
\begin{equation}\label{eq:main-decomposition}
 u_\lambda=R_\lambda+v_\lambda,\qquad
 R_\lambda(t)=\bigl(\chi\widetilde R_\lambda(t),0\bigr),
\end{equation}
where
\begin{equation}\label{eq:main-profile}
 \widetilde R_\lambda(t,x)
 =\mu(t)^{-1/2}Q\!\left(\frac{x-a}{\mu(t)}\right)
 \exp\!\left(\frac{i}{\lambda\mu(t)}
 -\frac{i\lambda(x-a)^2}{4\mu(t)}\right)
\end{equation}
and $v_\lambda$ satisfies
\begin{equation}\label{eq:main-correction}
 \norm{v_\lambda(t)}_{L^2(\G)}
 \le \frac{C}{\lambda}e^{-\delta/(2\mu(t))},\qquad
 \norm{v_\lambda(t)}_{H^1(\G)}
 \le \frac{C}{\lambda}e^{-\delta/(3\mu(t))}.
\end{equation}
Moreover, $u_\lambda$ satisfies the following properties:
\begin{enumerate}
\item[(i)] \(M(u_\lambda(t))=M_\R(Q)\) for \(0\le t<T_\lambda\).
\item[(ii)] For every \(0<\eta<a/2\), with \(I_\eta=(a-\eta,a+\eta)\),
\begin{equation}\label{eq:local-gradient-asymptotic}
 \mu(t)\norm{\partial_xu_{\lambda,1}(t)}_{L^2(I_\eta)}
 \longrightarrow\norm{Q'}_{L^2(\R)}
 \qquad(t\uparrow T_\lambda).
\end{equation}
\item[(iii)] If \(K\Subset[0,\infty)\setminus\{a\}\) is a compact interval, then
\begin{equation}\label{eq:active-away}
 \norm{u_{\lambda,1}(t)}_{H^1(K)}\longrightarrow0
 \qquad(t\uparrow T_\lambda).
\end{equation}
\item[(iv)] On the second edge,
\begin{equation}\label{eq:inactive-edge}
 \norm{u_{\lambda,2}(t)}_{H^1(\R_+)}\longrightarrow0
 \qquad(t\uparrow T_\lambda).
\end{equation}
\item[(v)] For every \(\eps>0\),
\begin{equation}\label{eq:mass-concentration}
 \int_{\{x\ge0:\,\abs{x-a}\le\eps\}}
 \abs{u_{\lambda,1}(t,x)}^2\dd x
 \longrightarrow\norm{Q}_{L^2(\R)}^2
 \qquad(t\uparrow T_\lambda).
\end{equation}
\end{enumerate}
\end{theorem}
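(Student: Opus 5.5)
The proof will follow the Ogawa--Tsutsumi / Burq--G\'erard--Tzvetkov strategy. Fix $\chi\in C_c^\infty(0,\infty)$ with $\chi\equiv1$ on $[a-2\eta_0,a+2\eta_0]$ and $\operatorname{supp}\chi\subset[a/4,\infty)$; choose $\delta>0$ smaller than the distance from $a$ to the set where $\chi'\neq0$. First observe that $\widetilde R_\lambda$ is an exact solution of the quintic NLS on $\R$: it is $S$ from \eqref{eq:PC}, translated to $a$ and rescaled by $\lambda$, with time reversed so that blow-up occurs at $T_\lambda$.

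Consequently the residual
\[
\Psi:=i\partial_tR_\lambda-H_\alpha R_\lambda+F(R_\lambda)
\]
only involves $\chi'$ and $\chi''$ terms, together with $(\chi^5-\chi)|\widetilde R|^4\widetilde R$. All of these are supported where $|x-a|\ge\delta$, where $Q((x-a)/\mu)\lesssim e^{-2\delta/\mu}$. The phase derivative there costs at most $\lambda/\mu$, and further factors of $\mu^{-k}$ are absorbed into the exponential. This gives
\[
\|\Psi(t)\|_{H^1}\le C\lambda^{m}e^{-\delta/\mu(t)}
\]
for some fixed $m$. Moreover $R_\lambda(t)\in D(H_\alpha)$, since it vanishes near the vertex.

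Next, $v=u-R$ must satisfy
\[
i\partial_tv-H_\alpha v+F(R+v)-F(R)+\Psi=0,\qquad v(T_\lambda)=0 .
\]
This is solved backward in time by the final-time Duhamel map
\[
\Phi(v)(t)=-i\int_t^{T_\lambda}S_\alpha(t-s)\bigl[F(R+v)-F(R)+\Psi\bigr](s)\dd s
\]
in the space of $v\in C([0,T_\lambda);\E(\G))$ with the weighted norm
\[
\sup_t\Bigl(\lambda e^{\delta/(2\mu)}\|v\|_2+\lambda e^{\delta/(3\mu)}\|v\|_{\E}\Bigr),
\]
where $\|\cdot\|_\E$ is equivalent to $(q_\alpha+c\|\cdot\|_2^2)^{1/2}$, so that $S_\alpha$ is an isometry on it.

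The key estimates are the following.
\begin{itemize}
\item $\|F(R+v)-F(R)\|_2\lesssim\|R\|_\infty^4\|v\|_2+\dots$, with $\|R\|_\infty^4\sim\mu^{-2}$.
\item Integrating $\mu(s)^{-2}e^{-\delta/(2\mu(s))}$ in $s$, with $\dd s=\dd\mu/\lambda$, gains the factor $\mu^{-2}\cdot\mu^2/\lambda\cdot\delta^{-1}$, hence a small factor once $\lambda\ge\lambda_0$. This is where $T_\lambda=\kappa/\lambda^2$ enters: $\mu\le\kappa/\lambda$ is small.
\item For the $H^1$ bound, $\|\partial_x R\|_\infty\sim\lambda\mu^{-3/2}$. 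The resulting polynomial losses in $\mu^{-1}$ and $\lambda$ are absorbed by trading $e^{-\delta/(2\mu)}$ for $e^{-\delta/(3\mu)}$, since $\mu^{-k}e^{-\delta/(6\mu)}\le C\lambda^{-k'}$ for small $\mu$.
\end{itemize}
Contraction then follows for $\lambda\ge\lambda_0$ and yields \eqref{eq:main-correction}. I expect this step, tuning the weights so that the $H^1$ estimate closes with the large factors $\lambda\mu^{-3/2}$, to be the main technical obstacle.

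\textbf{Persistence of regularity.} Upgrading $u$ to a strong solution would proceed by Kato's method. For $t<T_\lambda$, local well-posedness in $D(H_\alpha)$ holds by the standard argument: $F$ maps $D(H_\alpha)$ into $H^2$, though not into $D(H_\alpha)$, but $H_\alpha$ applied to the Duhamel integral can be handled through $\partial_tu$. Concretely, differentiate the equation in time, estimate $\partial_tu$ in $L^2$ by Gronwall using the bound on $\|u\|_\infty$ available from the $H^1$ bound on $[0,t_1]$ for $t_1<T_\lambda$, and then recover $H_\alpha u=i\partial_tu+F(u)\in L^2$. Uniqueness of $\E$-mild solutions identifies this strong solution with the constructed one on every $[0,t_1]$.

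\textbf{The properties (i)--(v).} These follow from the decomposition.
\begin{itemize}
\item[(i)] Mass is conserved for strong solutions, and as $t\uparrow T$ we have $v\to0$ in $L^2$ and $\|\chi\widetilde R\|_2\to\|Q\|_2$, since the mass of $\widetilde R$ concentrates at $a$ where $\chi=1$.
\item[(ii)] We have $\mu\|\partial_x\widetilde R\|_{L^2(I_\eta)}\to\|Q'\|_2$. The phase term contributes $\lambda\mu^{-1/2}\cdot\mu\,\|yQ\|$-type quantities: after rescaling, the phase derivative is $\lambda(x-a)/\mu$, whose weighted $L^2$ size is $O(\lambda\mu^{1/2})\cdot\mu^{-1/2}$, which is $o(\mu^{-1})$. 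The term $\mu\|v\|_{H^1}$ tends to $0$.
\item[(iii), (iv), (v)] These follow from exponential decay of $Q$ away from $a$ together with $v\to0$ in $H^1$.
\end{itemize}

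Finally, $\|\partial_xu\|_2\sim C/(T-t)$ follows from (ii) together with the global version of the same computation.
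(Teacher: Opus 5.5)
You reproduce the paper's construction faithfully: the cut-off profile, the error bound, and the weighted final-time contraction with the same weights $e^{\delta/(2\mu)}$ and $e^{\delta/(3\mu)}$. The borderline integral $e^{c/r}\int_0^r\rho^{-2}e^{-c/\rho}\dd\rho=1/c$ gives the small factor $1/\lambda$, and the decomposition and (i)--(v) then follow as you describe. Mass conservation already holds for energy solutions, so none of this needs strong regularity.

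\textbf{The gap is the persistence of $D(H_\alpha)$.} The fixed point only gives $u_\lambda\in C([0,T_\lambda);\E(\G))$. Nothing you say shows $u_\lambda(t_0)\in D(H_\alpha)$ for even one $t_0<T_\lambda$. Since $F(R+v)$ violates the $\delta$-condition, the Duhamel integral has no evident reason to lie in $D(H_\alpha)$. Your Kato-type argument (strong local well-posedness, Gronwall for $\partial_t u$ on $[0,t_1]$, identification by $\E$-uniqueness) presupposes such a datum, or equivalently an $L^2$ bound on $\partial_t u$ at some time. The only datum this solution has is the singular terminal state at $T_\lambda$, so as written the step is circular.

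The paper closes the gap by approximation, in four steps.
\begin{enumerate}
\item It takes strong solutions $u_n$ with terminal data $u_n(t_n)=R_\lambda(t_n)\in D(H_\alpha)$, where $t_n\uparrow T_\lambda$.
\item Via the truncated contraction, whose constants do not depend on the truncation time, it shows that $v_{\lambda,n}=u_n-R_\lambda$ obey the weighted bounds uniformly in $n$, and that $u_n\to u_\lambda$ in $C([0,T'];L^2(\G))$.
\item It bounds $\|\partial_t v_{\lambda,n}\|_2$ on $[0,T']$ uniformly in $n$; through the equation this bounds $\|H_\alpha u_n(t)\|_2$.
\item Weak closedness of the graph of $H_\alpha$ then gives $u_\lambda(t)\in D(H_\alpha)$.
\end{enumerate}

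The uniform time-derivative bound in step 3 is the real difficulty, and a plain Gronwall on $\partial_t u_n$ would not give it. The linearized equation contains a term $B\bar z$ that self-adjointness does not remove. So $\frac{\mathrm d}{\mathrm dt}\|z\|_2^2$ is only controlled by $C\mu^{-2}\|z\|_2^2$. The resulting Gronwall factor $\exp\bigl(\frac{C}{\lambda}(\mu(t_n)^{-1}-\mu(t)^{-1})\bigr)$ diverges as $n\to\infty$, while $\|\partial_t u_n(t_n)\|_2$ is polynomially large in $\mu(t_n)^{-1}$.

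The paper instead estimates difference quotients of $v_{\lambda,n}$. Their terminal value $-v_{\lambda,n}(t_n-h)/h$ is exponentially small because $v_{\lambda,n}(t_n)=0$. It uses the weight $e^{2\gamma/\mu(t+h)}$ with $\gamma\lambda$ large, whose derivative absorbs the $B\bar z$ contribution.
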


\section{Preliminaries}\label{sec:preliminaries}

In this section, we present useful properties of the Hamiltonian $H_\alpha$
(see \cite{Pava2019} for a systematic treatment),
some elements of well-posedness theory for \eqref{eq:nls} (see \cite{genoud2024blowup}
for more details), and properties of the Euclidean pseudoconformal
profile. We then construct the approximate profile and state its most important properties,
as well as those of the associated error term.

\begin{proposition}[Linear and nonlinear framework]\label[proposition]{prop:framework}
Let \(\alpha_-:=\max\{-\alpha,0\}\) and
\[
 c_\alpha:=1+\frac{\alpha_-^2}{4}.
\]

Since $\G$ has two edges,
\[
 \inf\sigma(H_\alpha)=-\frac{\alpha_-^2}{4},
 \qquad H_\alpha+c_\alpha I\ge I.
\]
In particular, $H_\alpha+c_\alpha I$ is strictly positive. Furthermore,
\(D((H_\alpha+c_\alpha)^{1/2})=\E(\G)\) and
\begin{equation}\label{eq:norm-equivalence}
 \norm{f}_{E_{c_\alpha}}
 :=\norm{(H_\alpha+c_\alpha)^{1/2}f}_{L^2(\G)}
 \simeq\norm{f}_{H^1(\G)}.
\end{equation}
The group \(S_\alpha(t)\) is unitary on \(L^2(\G)\), preserves
\(\E(\G)\) and satisfies
\begin{equation}\label{eq:group-H1}
 \norm{S_\alpha(t)f}_{H^1(\G)}
 \le C_\alpha\norm{f}_{H^1(\G)},
 \qquad t\in\R,\quad f\in\E(\G).
\end{equation}
For data in \(\E(\G)\), \eqref{eq:nls} is locally well-posed in the
mild sense and conserves \eqref{eq:mass}--\eqref{eq:energy}, with the usual
blow-up alternative. Furthermore, if the initial datum belongs to $D(H_\alpha)$,
the corresponding energy solution is strong throughout its
maximal interval of existence.
\end{proposition}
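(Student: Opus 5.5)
The plan is to split the proof into a linear spectral part and a nonlinear well-posedness part, both handled by the quadratic form $q_\alpha$.

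\emph{Spectral bound.} I would first compute $\inf\sigma(H_\alpha)$. For $\alpha\ge0$ we have $q_\alpha\ge0$, so $H_\alpha\ge0$, and since the essential spectrum is $[0,\infty)$ the infimum is $0=-\alpha_-^2/4$. For $\alpha<0$ I would bound the vertex term using $\abs{u(0)}^2=-2\,\mathrm{Re}\int_0^\infty \bar u_j u_j'\dd x$ on each edge and average over the two edges. This gives
\[
\abs{\alpha}\abs{u(0)}^2\le \abs{\alpha}\sum_{j}\norm{u_j}_2\norm{u_j'}_2\le \sum_j\norm{u_j'}_2^2+\frac{\alpha^2}{4}\norm{u}_2^2,
\]
where the last step is Young's inequality, and hence $q_\alpha(u)\ge-\frac{\alpha^2}{4}\norm{u}_2^2$. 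Sharpness follows from the eigenfunction $u_j(x)=e^{\alpha x/2}$, $j=1,2$. Here $\alpha<0$ gives decay, and the condition $u_1'(0)+u_2'(0)=\alpha=\alpha u(0)$ holds, so $-\alpha^2/4$ is an eigenvalue. Adding $c_\alpha$ then yields $H_\alpha+c_\alpha\ge I$.

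\emph{Form domain and norm equivalence.} By Kato's representation theorem, $D((H_\alpha+c_\alpha)^{1/2})=D(q_\alpha)=\E(\G)$, and
\[
\norm{f}_{E_{c_\alpha}}^2=q_\alpha(f)+c_\alpha\norm{f}_2^2.
\]
For the upper bound I would use the trace estimate $\abs{f(0)}^2\le\norm{f_1}_2\norm{f_1}_{H^1}$, which gives $\norm{f}_{E_{c_\alpha}}\lesssim\norm{f}_{H^1}$. For the lower bound I would repeat the Young argument above with weight $\frac12$ instead of $1$:
\[
q_\alpha(f)\ge\tfrac12\norm{f'}_2^2-C\norm{f}_2^2 .
\]
This gives \eqref{eq:norm-equivalence} after enlarging constants; since $c_\alpha\ge1$, I expect the constants can be absorbed directly.

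\emph{Group bounds.} Unitarity of $S_\alpha(t)$ is Stone's theorem. Because $S_\alpha(t)$ commutes with $(H_\alpha+c_\alpha)^{1/2}$, it is an isometry for $\norm{\cdot}_{E_{c_\alpha}}$, so it preserves $\E(\G)$. Combining this isometry with the norm equivalence gives \eqref{eq:group-H1}.

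\emph{Nonlinear well-posedness.} I would run a fixed point for the Duhamel formula \eqref{eq:duhamel-solution} in $C([0,T];\E(\G))$. The needed ingredients are:
\begin{itemize}
\item $F(u)=\abs{u}^4u$ maps $\E(\G)$ into itself, because $H^1(\R_+)$ is an algebra embedded in $L^\infty$ and continuity at the vertex is preserved by composition;
\item $F$ satisfies the local Lipschitz bound $\norm{F(u)-F(v)}_{H^1}\lesssim(\norm{u}_{H^1}^4+\norm{v}_{H^1}^4)\norm{u-v}_{H^1}$.
\end{itemize}
Together with \eqref{eq:group-H1}, these give a contraction for a time $T$ depending only on $\norm{u_0}_{H^1}$, and that dependence yields the blow-up alternative. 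For conservation of mass and energy I would take the standard route: prove the identities first for $D(H_\alpha)$ data, then extend to $\E(\G)$ data by density and continuous dependence.

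\emph{Regularity persistence.} For data in $D(H_\alpha)$, I would follow Kato's approach, which the paper invokes later (\cref{sec:regularity}), to show the solution stays strong. Differentiate the Duhamel formula in time, or equivalently control $\partial_t u$ in $L^2$ by a Gronwall bound involving $\norm{u}_{L^\infty}^4$, which is finite as long as the $H^1$ norm stays bounded. The equation then gives $H_\alpha u=i\partial_tu+F(u)\in L^2$. The key point is that regularity is recovered through $\partial_t u$ rather than by applying $H_\alpha$ to $F(u)$, since $F(u)$ need not satisfy the vertex condition. This makes the domain issue the main obstacle, and for it I would cite \cite{genoud2024blowup}, where this persistence result is established for \eqref{eq:nls}.
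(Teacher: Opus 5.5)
Your proposal is correct. The paper gives no proof of this proposition: it refers to \cite{Pava2019} for the spectral and form-domain facts and to \cite{genoud2024blowup} for the Cauchy theory. You instead derive the linear part directly from $q_\alpha$: the two-edge averaged trace identity with Young's inequality for the lower bound, the explicit eigenfunction $e^{\alpha x/2}$ on both edges for sharpness, Kato's representation theorem for the form domain, and functional calculus for the group. For the nonlinear part you sketch the standard fixed point and Kato's $\partial_t u$ argument, deferring to \cite{genoud2024blowup} for domain persistence exactly as the paper does. Your route buys a self-contained account with explicit constants. It also matches the mechanism the paper later uses for the backward problem in \cref{sec:regularity}: regularity is recovered through time difference quotients, never by applying $H_\alpha$ to $F(u)$.

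Three small points to tidy.
\begin{itemize}
\item The one-edge trace bound should read $|f(0)|^2\le 2\|f_1\|_2\|f_1'\|_2$. Without the factor $2$ it fails, for instance for $f_1=e^{-kx}$ with $k\ge1$. Alternatively, use the two-edge average, which gives $|f(0)|^2\le\|f\|_{L^2(\G)}\|f\|_{H^1(\G)}$.
\item In the lower bound, $c_\alpha\ge1$ alone does not absorb the negative term when $\alpha_->2$. Combining $q_\alpha(f)\ge\frac12\|f'\|_2^2-\frac{\alpha_-^2}{2}\|f\|_2^2$ with $\|f\|_{E_{c_\alpha}}\ge\|f\|_2$ (from $H_\alpha+c_\alpha\ge I$) gives $(1+\frac{\alpha_-^2}{2})\|f\|_{E_{c_\alpha}}^2\ge\frac12\|f\|_{H^1}^2$, which is the equivalence you want.
\item Proving conservation for $D(H_\alpha)$ data requires those solutions to be strong. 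The regularity-persistence step should therefore logically precede the density argument for mass and energy conservation.
\end{itemize}
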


We shall use
\begin{align}
 \norm{f}_{L^\infty(\G)}
 &\le C\norm{f}_{H^1(\G)},\label{eq:sobolev-graph}\\
 \norm{f}_{L^q(\G)}
 &\le C_q\norm{f'}_2^{1/2-1/q}\norm{f}_2^{1/2+1/q},
 \qquad 2\le q<\infty.\notag
\end{align}
For \(F(z)=\abs{z}^4z\), regarded as a real differentiable map on
\(\C\simeq\R^2\),
\begin{align}
 \abs{F(z)-F(w)}
 &\le C(\abs{z}^4+\abs{w}^4)\abs{z-w},\label{eq:F-Lipschitz}\\
 \norm{DF(z)}&\le C\abs{z}^4,\qquad
 \norm{DF(z)-DF(w)}
 \le C(\abs{z}^3+\abs{w}^3)\abs{z-w}.
 \label{eq:DF-Lipschitz}
\end{align}

\begin{lemma}[Euclidean pseudoconformal profile]\label[lemma]{lem:euclidean-profile}
For \(a,T\in\R\), \(\ell>0\) and \(p,\vartheta\in\R\), define
\begin{equation}\label{eq:euclidean-family}
\begin{aligned}
 S_{a,T,\ell,p,\vartheta}(t,x)
 :={}&\frac{e^{i\vartheta}}{(\ell(T-t))^{1/2}}
 \exp\!\left(\frac{i}{\ell^2(T-t)}
 -\frac{i(x-a)^2}{4(T-t)}\right)\\
 &\times Q\!\left(\frac{x-a}{\ell(T-t)}-p\right),
 \qquad t<T.
\end{aligned}
\end{equation}
Then \(S_{a,T,\ell,p,\vartheta}\) solves \eqref{eq:line-nls-intro}, has
mass \(M_{\R}(Q)\) and concentrates at
\begin{equation}\label{eq:physical-center}
 x_c(t)=a+\ell(T-t)p.
\end{equation}
Moreover, \(Q\) and all its derivatives decay exponentially.
\end{lemma}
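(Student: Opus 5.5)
The lemma is a direct consequence of the symmetries of \eqref{eq:line-nls-intro}, so I will verify it by reducing $S_{a,T,\ell,p,\vartheta}$ to the standing wave $e^{it}Q$ through a chain of symmetry transformations, rather than by brute-force substitution. The starting point is the pseudoconformal (lens) transform. If $w$ solves \eqref{eq:line-nls-intro}, then
\[
\widetilde w(t,x)=\frac{1}{|t|^{1/2}}\,w\Big(-\frac1t,\frac{x}{t}\Big)e^{i\frac{x^2}{4t}}
\]
is again a solution for $t<0$, by the standard computation valid in the $L^2$-critical case. Applying this to $w(t,x)=e^{it}Q(x)$ gives exactly $S$ in \eqref{eq:PC}, since $-1/t=1/|t|$ and $x^2/(4t)=-x^2/(4|t|)$. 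Before doing that, I will insert a Galilean boost with velocity $2p$ (using evenness of $Q$), i.e. start from
\[
w(t,x)=e^{it}Q(x-2pt)\,e^{i(px-p^2t)}\quad(\text{up to sign conventions}),
\]
so that after the lens transform the argument of $Q$ becomes $x/|t|-p$. The extra phase terms $e^{ipx/t}$ and $e^{-ip^2/|t|}\cdot$const will be checked to reduce to a $t$-independent phase plus a linear-in-$x$ phase. If a residual linear-in-$x$ phase does not match \eqref{eq:euclidean-family}, I will absorb it by completing the square with a shift of the center $a$. This bookkeeping of phases is the only step where care is needed, and it is where I expect the main (purely computational) obstacle. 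As a fallback, I would directly substitute the ansatz $(\ell\tau)^{-1/2}e^{i\phi(\tau,x)}Q(y)$, with $\tau=T-t$ and $y=(x-a)/(\ell\tau)-p$, and use \eqref{eq:ground-state-intro} to cancel the terms.

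Next I apply the remaining symmetries, each of which preserves solutions. These are time translation $t\mapsto t-T$, which turns $|t|$ into $T-t$; space translation $x\mapsto x-a$; and the phase rotation $e^{i\vartheta}$. The last one is the scaling $w\mapsto \ell^{-1/2}w(t/\ell^2,x/\ell)$ composed with the lens transform. In the pseudoconformal frame this turns $1/|t|$ into $1/(\ell^2(T-t))$ in the phase and $x/|t|$ into $(x-a)/(\ell(T-t))$ in the argument, while leaving the quadratic chirp $(x-a)^2/(4(T-t))$ unchanged. Composing these gives \eqref{eq:euclidean-family}.

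The remaining claims are straightforward. Mass conservation follows from the change of variables $y=(x-a)/(\ell(T-t))-p$ in $\int|S|^2\dd x$, which gives $\norm{Q}_{L^2}^2$. The concentration at $x_c(t)$ in \eqref{eq:physical-center} follows from the fact that $|S|^2$ is an $L^1$-normalized rescaling of $Q^2$ centered where $y=0$. Exponential decay of $Q$ and its derivatives follows from \eqref{eq:Q-explicit}, since $\sech^{1/2}(2x)\le 2^{1/2}e^{-|x|}$. Each derivative of $\sech^{1/2}(2x)$ is a polynomial in $\tanh(2x)$ times $\sech^{1/2}(2x)$, so it obeys the same kind of bound.
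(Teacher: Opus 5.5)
\paragraph{The step that should produce $p$ fails.} This is not a phase-bookkeeping issue. A Galilean boost applied \emph{before} the lens transform becomes a \emph{fixed spatial translation} after it. Take $w(s,y)=e^{is}Q(y-2ps)e^{i(py-p^2s)}$ and substitute $s=-1/t$, $y=x/t$. The argument of $Q$ becomes $y-2ps=(x+2p)/t$, and the phases combine with the chirp as
\[
 -\frac1t+\frac{px}{t}+\frac{p^2}{t}+\frac{x^2}{4t}
 =\frac1t\Big(-1+\frac{(x+2p)^2}{4}\Big).
\]
So the lens transform is exactly $S(t,x+2p)$. Completing the square, as you plan, only recentres this static profile at $a-2p$. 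You obtain the $p=0$ member of the family with a shifted $a$, whose centre is fixed in time. You never obtain \eqref{eq:euclidean-family}, whose centre $a+\ell(T-t)p$ moves while the chirp stays centred at $a$. This moving centre is the only thing the lemma adds to the classical fact that $S$ solves \eqref{eq:line-nls-intro}. Hence your chain, as written, does not prove the statement for $p\neq0$.

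\paragraph{How to repair it.} The roles are swapped: a translation before the lens transform corresponds to a boost after it. Start instead from the translated standing wave $e^{is}Q(y+p)$. By evenness of $Q$, its lens transform for $t<0$ is
\[
 |t|^{-1/2}\exp\Big(\frac{i}{|t|}-\frac{ix^2}{4|t|}\Big)Q\Big(\frac{x}{|t|}-p\Big),
\]
with no leftover phase. Then apply the scaling, $t\mapsto t-T$, $x\mapsto x-a$ and $e^{i\vartheta}$ to obtain \eqref{eq:euclidean-family}. Note that your scaling $w\mapsto\ell^{-1/2}w(t/\ell^2,x/\ell)$ works only if applied to the standing wave before the lens transform. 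Applied after it, it produces $\ell^2/|t|$ and $\ell x/|t|$, and you need $w\mapsto\ell^{1/2}w(\ell^2t,\ell x)$ instead.

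\paragraph{Your fallback is the paper's proof.} With $\tau=T-t$, $\mu=\ell\tau$ and $z=(x-a)/\mu-p$, direct differentiation gives
\[
 i\partial_tS+\partial_x^2S+\abs{S}^4S
 =\mu^{-1/2}e^{i\vartheta+i/(\ell^2\tau)-i(x-a)^2/(4\tau)}
 \frac{Q''-Q+Q^5}{\ell^2\tau^2},
\]
with $Q$ evaluated at $z$. Here $p$ plays no role, because $\partial_xz=1/\mu$ and $\partial_tz=(x-a)/(\ell\tau^2)$ do not involve it. This one-line check is shorter and less error-prone than the symmetry chain. Your arguments for the mass, the concentration point and the exponential decay of $Q$ and its derivatives are correct.
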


\begin{proof}
The properties of \(Q\) follow from \eqref{eq:Q-explicit}; uniqueness of the
positive ground state is classical \cite{Kwong1989}.  Put
\(\tau=T-t\), \(\mu=\ell\tau\) and \(z=(x-a)/\mu-p\).  Direct
differentiation of \eqref{eq:euclidean-family} gives
\[
 i\partial_tS+\partial_x^2S+\abs{S}^4S
 =\mu^{-1/2}e^{i\vartheta+i/(\ell^2\tau)-i(x-a)^2/(4\tau)}
 \frac{Q''-Q+Q^5}{\ell^2\tau^2}=0.
\]
The change of variables \(z=(x-a)/\mu-p\) gives
\(\frac12\norm{S}_{L^2(\R)}^2=M_{\R}(Q)\).
The maximum of \(\abs S\) occurs at \(z=0\), proving
\eqref{eq:physical-center}.
\end{proof}

Now that we have introduced the preliminary tools and described the pseudoconformal family, we proceed with the construction of the approximate profile on the graph using a smooth cut-off function.

Fix \(a>0\).  Choose a real-valued \(\chi\in C_c^\infty(0,\infty)\),
\(0\le\chi\le1\), such that
\begin{equation}\label{eq:cutoff}
 \chi=0\quad\text{on }(0,a/4),\qquad
 \chi=1\quad\text{on }(a/2,3a/2).
\end{equation}
Fix \(\kappa>0\), let \(\lambda\ge1\) and set
\begin{equation}\label{eq:scales}
 T=T_\lambda:=\frac{\kappa}{\lambda^2},\qquad
 \mu(t):=\lambda(T-t),\qquad 0\le t<T.
\end{equation}
The Euclidean profile is
\begin{equation}\label{eq:profile}
 \widetilde R_\lambda(t,x)
 :=\mu(t)^{-1/2}Q\!\left(\frac{x-a}{\mu(t)}\right)e^{i\theta_\lambda(t,x)},
 \qquad
 \theta_\lambda(t,x):=\frac{1}{\lambda\mu(t)}
 -\frac{\lambda(x-a)^2}{4\mu(t)}.
\end{equation}
Since $\widetilde R_\lambda=S_{a,T,\lambda,0,0}$, by \cref{lem:euclidean-profile}, it is an exact free quintic NLS solution.
Define
\begin{equation}\label{eq:cutoff-profile}
 R_\lambda(t):=\bigl(\chi\widetilde R_\lambda(t),0\bigr).
\end{equation}
It and its first derivative vanish near the vertex, so
\(R_\lambda(t)\in D(H_\alpha)\) independently of \(\alpha\).

\begin{lemma}[Profile and error estimates]\label[lemma]{lem:profile-error}
There exist \(c,\delta>0\), independent of sufficiently large \(\lambda\),
such that, for every integer \(k\ge0\),
\begin{equation}\label{eq:profile-derivatives}
 \abs{\partial_x^k\widetilde R_\lambda(t,x)}
 \le C_{k,\kappa}\mu(t)^{-1/2-k}
 \exp\!\left(-c\frac{\abs{x-a}}{\mu(t)}\right).
\end{equation}
In particular,
\begin{equation}\label{eq:profile-Linfty}
 \norm{R_\lambda(t)}_{L^\infty(\G)}\le C\mu(t)^{-1/2},\qquad
 \norm{\partial_xR_\lambda(t)}_{L^\infty(\G)}
 \le C\mu(t)^{-3/2}.
\end{equation}
Moreover,
\[
 R_\lambda\in C([0,T);D(H_\alpha))
 \cap C^1([0,T);L^2(\G)).
\]
The error
\begin{equation}\label{eq:error-definition}
 E_\lambda:=i\partial_tR_\lambda-H_\alpha R_\lambda+F(R_\lambda)
\end{equation}
belongs to \(C([0,T);\E(\G))\cap C^1([0,T);L^2(\G))\) and
\begin{equation}\label{eq:error-H1}
 \norm{E_\lambda(t)}_{H^1(\G)}
 \le C\exp\!\left(-\frac{\delta}{\mu(t)}\right).
\end{equation}
For each fixed \(\lambda\), there are integers \(M_{m,r}\ge0\) and
constants \(C_{m,r,\lambda}\) such that
\begin{equation}\label{eq:mixed-profile-estimates}
 \abs{\partial_t^r\partial_x^m\widetilde R_\lambda(t,x)}
 \le C_{m,r,\lambda}\mu(t)^{-M_{m,r}}
 e^{-c\abs{x-a}/\mu(t)},
\end{equation}
There are \(M_0,M\ge0\) such that
\begin{align}
 \norm{\partial_tR_\lambda(t)}_2
 &\le C_\lambda\mu(t)^{-M_0},\label{eq:profile-time}\\
 \norm{E_\lambda(t)}_{H^1}
 +\norm{\partial_tE_\lambda(t)}_2
 &\le C_\lambda\mu(t)^{-M}e^{-\delta/\mu(t)}.
 \label{eq:error-time}
\end{align}
\end{lemma}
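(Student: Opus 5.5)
The plan is to reduce everything to explicit computations on the single function \(\widetilde R_\lambda\), in the variables \(\tau=T-t\), \(\mu=\lambda\tau\) and \(z=(x-a)/\mu\).

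\emph{Derivative bounds.} Write \(\widetilde R_\lambda=\mu^{-1/2}Q(z)e^{i\theta_\lambda}\). Each \(\partial_x\) either hits \(Q(z)\), producing a factor \(\mu^{-1}\), or hits the phase, producing \(\partial_x\theta_\lambda=-\lambda(x-a)/(2\mu)=-\lambda z/2\). On \(\lambda\ge1\) and \(\mu\le\lambda T=\kappa/\lambda\le\kappa\), we have \(\lambda\le \kappa\mu^{-1}\). Hence every term of \(\partial_x^k\widetilde R_\lambda\) is bounded by \(C_{k,\kappa}\mu^{-1/2-k}\,|z|^j|Q^{(i)}(z)|\). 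Since \(Q\) and its derivatives decay like \(e^{-2|z|}\) by \cref{lem:euclidean-profile}, polynomial factors \(|z|^j\) are absorbed by taking \(c<2\). This gives \eqref{eq:profile-derivatives}. Multiplying by \(\chi\) and using Leibniz yields \eqref{eq:profile-Linfty}.

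\emph{Mixed derivatives and regularity.} For each fixed \(\lambda\), the same reasoning applied to \(\partial_t\) gives \eqref{eq:mixed-profile-estimates}. Here \(\partial_t\mu=-\lambda\), \(\partial_t z=\lambda z/\mu\), and \(\partial_t\theta_\lambda\) is a polynomial in \(z\) times powers of \(\mu^{-1}\) and \(\lambda\); constants may now depend on \(\lambda\). Integrating the pointwise bounds gives \eqref{eq:profile-time}. Continuity of \(t\mapsto R_\lambda(t)\) in \(H^2\) and \(C^1\) into \(L^2\) on \([0,T)\) follows by dominated convergence from these smooth bounds, since \(\mu\) is bounded below on compact subintervals. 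Because \(\chi\) vanishes near \(0\), \(R_\lambda(t)\in D(H_\alpha)\).

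\emph{Error term.} Since \(\widetilde R_\lambda\) solves the line NLS and \(F(\chi\widetilde R)=\chi^5F(\widetilde R)\), on edge 1 we get
\[
E_\lambda=-\chi''\widetilde R_\lambda-2\chi'\partial_x\widetilde R_\lambda+(\chi^5-\chi)F(\widetilde R_\lambda),
\]
and \(E_\lambda=0\) on edge 2. Each term is supported where \(|x-a|\ge a/2\), namely on \(\operatorname{supp}\chi'\cup\{0<\chi<1\}\). There, \eqref{eq:profile-derivatives} gives pointwise bounds of the form \(C\mu^{-N}e^{-ca/(2\mu)}\) for the function and its first derivative. The support of \(\chi\) is compact, so integration costs only a constant. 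Since \(\mu\le\kappa\), we absorb \(\mu^{-N}e^{-ca/(2\mu)}\le Ce^{-\delta/\mu}\) with \(\delta<ca/2\), which gives \eqref{eq:error-H1}. The error vanishes near the vertex, so \(E_\lambda\in\E(\G)\). For \eqref{eq:error-time}, apply \(\partial_t\) to the formula above and use \eqref{eq:mixed-profile-estimates} on the same region \(|x-a|\ge a/2\); this yields \(C_\lambda\mu^{-M}e^{-\delta/\mu}\). Continuity in \(\E(\G)\) and \(C^1\) into \(L^2\) follow as before.

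\emph{Main obstacle.} The only delicate point is uniformity in \(\lambda\) in \eqref{eq:profile-derivatives} and \eqref{eq:error-H1}: the phase derivatives bring powers of \(\lambda\), which must be converted into powers of \(\mu^{-1}\) via \(\lambda\mu\le\kappa\). The error term needs no such care, because the resulting polynomial loss \(\mu^{-N}\) is absorbed by shrinking \(\delta\).
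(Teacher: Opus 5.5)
Your proposal is correct and follows essentially the paper's argument: you expand $\partial_x^k\widetilde R_\lambda$ in $z=(x-a)/\mu$, convert powers of $\lambda$ into powers of $\mu^{-1}$ via $\lambda\mu\le\kappa$, and bound the explicit error on the region $\abs{x-a}\ge a/2$ containing the supports of $\chi'$, $\chi''$ and $\chi^5-\chi$, absorbing polynomial losses by shrinking $\delta$. There are two harmless slips: since $-H_\alpha$ acts as $+\partial_x^2$, the linear error terms are $+\chi''\widetilde R_\lambda+2\chi'\partial_x\widetilde R_\lambda$, and $Q(z)=3^{1/4}\sech^{1/2}(2z)$ decays like $e^{-\abs{z}}$ rather than $e^{-2\abs{z}}$, so you must take $c<1$ rather than $c<2$.
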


\begin{proof}
Writing \(y=(x-a)/\mu(t)\), direct differentiation gives
\[
 \abs{\partial_x^k\widetilde R_\lambda(t,x)}
 \le C_k\sum_{j=0}^k
 \lambda^j\mu(t)^{-1/2-k+j}
 e^{-c\abs{x-a}/\mu(t)}.
\]
Since \(\lambda\mu(t)\le\kappa\), this implies
\eqref{eq:profile-derivatives} and \eqref{eq:profile-Linfty}.
Repeated time and space differentiation proves
\eqref{eq:mixed-profile-estimates}; the explicit formula and the fixed
cut-off also give the stated continuity properties.

On the first edge, the free equation gives
\begin{equation}\label{eq:error-explicit}
 E_{\lambda,1}
 =2\chi'\partial_x\widetilde R_\lambda
 +\chi''\widetilde R_\lambda
 +(\chi^5-\chi)\abs{\widetilde R_\lambda}^4\widetilde R_\lambda,
 \qquad E_{\lambda,2}=0.
\end{equation}
The supports of \(\chi'\), \(\chi''\) and \(\chi^5-\chi\) have a fixed
positive distance \(d_0\) from \(a\).  Every term in
\eqref{eq:error-explicit}, its first spatial derivative and its first time
derivative is therefore bounded in the required norm by a polynomial in
\(\mu^{-1}\) times \(e^{-cd_0/\mu}\).  The inequality
\(\mu^{-m}e^{-d/\mu}\le C_{m,d}e^{-d/(2\mu)}\) gives
\eqref{eq:error-H1}--\eqref{eq:error-time}, after choosing a common
smaller \(\delta>0\).  On every compact time interval $[0,T']\subset[0,T)$,
\eqref{eq:error-explicit} and the mixed derivative estimates
\eqref{eq:mixed-profile-estimates} justify differentiation
with respect to time in $L^2(\G)$ and continuity in $H^1(\G)$
by dominated convergence. Therefore,
\[
 E_\lambda\in C([0,T);H^1(\G))
 \cap C^1([0,T);L^2(\G)).
\]
Since $E_\lambda$ vanishes near the vertex, it belongs to
$C([0,T);\E(\G))$.
\end{proof}
\section{Construction of the blow-up solution}\label{sec:construction}

We first isolate the weighted final-time argument.  The different exponents
in the \(L^2\) and \(H^1\) weights absorb the derivative of the concentrating
profile.

\begin{lemma}[Weighted terminal perturbation]\label[lemma]{lem:terminal-perturbation}
Let \(T=\kappa/\lambda^2\), \(\mu(t)=\lambda(T-t)\) and let
\(R\in C([0,T);D(H_\alpha))\cap C^1([0,T);L^2(\G))\).  Suppose that
\[
 E_R:=i\partial_tR-H_\alpha R+F(R)\in C([0,T);\E(\G))
\]
and that, for some \(A,\delta>0\),
\begin{align}
 \norm{R(t)}_\infty&\le A\mu(t)^{-1/2},
 &\norm{\partial_xR(t)}_\infty&\le A\mu(t)^{-3/2},
 \label{eq:abstract-profile-bounds}\\
 \norm{E_R(t)}_{H^1(\G)}&\le Ae^{-\delta/\mu(t)}.
 \label{eq:abstract-error-bound}
\end{align}
Set
\begin{align}
 \norm{v}_{X_T}:=\sup_{0\le t<T}\bigl(&
 e^{\delta/(2\mu(t))}\norm{v(t)}_{L^2(\G)}
 +e^{\delta/(3\mu(t))}\norm{v(t)}_{H^1(\G)}\bigr),
 \label{eq:XT-norm}\\
 X_T&:=\{v\in C([0,T);\E(\G)):\norm{v}_{X_T}<\infty\}.
\end{align}
There exist \(C_0>0\) and \(\lambda_0\ge1\), depending only on \(A, \delta, \kappa\) 
and \(\alpha\), such that, for \(\lambda\ge\lambda_0\),
\begin{equation}\label{eq:abstract-Phi}
 \Phi_R(v)(t):=-i\int_t^T S_\alpha(t-s)
 \bigl(F(R+v)-F(R)+E_R\bigr)(s)\dd s
\end{equation}
has a unique fixed point in
\[
 B_{\rho_\lambda}:=\{v\in X_T:\norm{v}_{X_T}\le\rho_\lambda\},
 \qquad \rho_\lambda:=\frac{2C_0}{\lambda}.
\]
For this fixed point, \(u=R+v\) is a mild solution of \eqref{eq:nls} and
\(v(t)\to0\) in \(\E(\G)\) as \(t\uparrow T\).
\end{lemma}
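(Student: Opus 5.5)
The plan is to apply the contraction mapping principle to $\Phi_R$ on $B_{\rho_\lambda}$. We show that $\Phi_R$ maps $B_{\rho_\lambda}$ into itself and is a contraction with constant $\tfrac12$, for $\lambda$ large.

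Set $N(v):=F(R+v)-F(R)$ and split $\Phi_R(v)=\Phi^{(1)}(v)+\Phi^{(2)}$. Here $\Phi^{(1)}$ is the Duhamel integral of $N(v)$, and $\Phi^{(2)}$ is the Duhamel integral of $E_R$. For the $L^2$ part we use unitarity of $S_\alpha$; for the $H^1$ part we use \eqref{eq:group-H1}. These apply because the integrands lie in $\E(\G)$: $R$, $v$ and $E_R$ are continuous at the vertex, hence so is $F(R+v)$. Continuity in time of $\Phi_R(v)$ in $\E(\G)$ follows from the strong continuity of $S_\alpha$ on $\E(\G)$ (via the equivalent norm \eqref{eq:norm-equivalence}) and dominated convergence.

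The key elementary estimate is a weighted time integral. Since $\mathrm{d}s=-\mathrm{d}\mu/\lambda$,
\[
\int_t^T \mu(s)^{-m}e^{-\beta/\mu(s)}\,\mathrm{d}s=\frac1\lambda\int_0^{\mu(t)}\nu^{-m}e^{-\beta/\nu}\,\mathrm{d}\nu\le \frac{C_{m,\beta,\beta'}}{\lambda}\,e^{-\beta'/\mu(t)}
\]
for any $0<\beta'<\beta$. This uses $\mu(t)\le\kappa/\lambda\le\kappa$. Applied to $E_R$ via \eqref{eq:abstract-error-bound} with $\beta=\delta$, it gives $\|\Phi^{(2)}\|_{X_T}\le C_0/\lambda$, since both weights $e^{\delta/2\mu}$ and $e^{\delta/3\mu}$ are beaten by $e^{-\delta/\mu}$. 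This fixes $C_0$ and explains the radius $\rho_\lambda=2C_0/\lambda$.

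For $N(v)$ in $L^2$, \eqref{eq:F-Lipschitz} together with \eqref{eq:sobolev-graph} give
\[
\|N(v)\|_2\le C\bigl(\mu^{-2}+\|v\|_{H^1}^4\bigr)\|v\|_2\le C\mu^{-2}e^{-\delta/2\mu}\|v\|_{X_T}.
\]
The same integral lemma, with a slightly reduced exponent absorbed by the polynomial factor, yields a bound $C\lambda^{-1}\|v\|_{X_T}$ after multiplication by $e^{\delta/2\mu(t)}$. I expect the reduced exponent to cost only a power of $\mu$; if it does not close, I will use the fact that $\int_t^T\mu^{-2}\,\mathrm ds\sim \lambda^{-1}\mu(t)^{-1}$ and $\mu^{-1}e^{-\delta/2\mu}$ monotonicity.

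The main obstacle is the $H^1$ bound of $N(v)$. Here $\partial_x N(v)=DF(R+v)\partial_x v+\bigl(DF(R+v)-DF(R)\bigr)\partial_x R$. The first term is bounded by $C(\mu^{-2}+\|v\|_{H^1}^4)\|v\|_{H^1}$ and is handled as above with weight $e^{-\delta/3\mu}$. In the second term, $\partial_xR$ carries the large factor $\mu^{-3/2}$. By \eqref{eq:DF-Lipschitz} it is at most $C(\mu^{-3/2}+\|v\|_\infty^3)\,\mu^{-3/2}\|v\|_2$. We must bound it in $L^2$ using $\|v\|_2$, not $\|v\|_{H^1}$, so that the better $L^2$ weight $e^{-\delta/2\mu}$ is available. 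The loss $\mu^{-3}$ is then absorbed since $e^{-\delta/2\mu}\mu^{-3}\le Ce^{-(\delta/3+\epsilon)/\mu}$; this is exactly why the two weights differ. Integrating and multiplying by $e^{\delta/3\mu(t)}$ gives $C\lambda^{-1}\|v\|_{X_T}$, provided $\|v\|_{X_T}\le\rho_\lambda\le 1$.

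The difference estimates $\Phi_R(v)-\Phi_R(w)$ follow identically from \eqref{eq:F-Lipschitz}--\eqref{eq:DF-Lipschitz}. They give a constant $C_1/\lambda<\tfrac12$ once $\lambda\ge\lambda_0$, so $\Phi_R$ is a contraction on $B_{\rho_\lambda}$ and has a unique fixed point.

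It remains to check that $u=R+v$ is a mild solution. Write the fixed point identity at times $t$ and $t_0$, apply $S_\alpha(t-t_0)$, and subtract. Use that $R$ is a strong solution of $i\partial_tR-H_\alpha R+F(R)=E_R$, whose Duhamel formula holds because $R\in C^1(L^2)\cap C(D(H_\alpha))$. This yields \eqref{eq:duhamel-solution} for $u$. Finally, $\|v(t)\|_{H^1}\le\rho_\lambda e^{-\delta/3\mu(t)}\to0$ as $t\uparrow T$.
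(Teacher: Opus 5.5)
\textbf{Where the argument breaks.} Your architecture matches the paper: contraction on $B_{\rho_\lambda}$, the $E_R$ term handled with a strict exponential gap, and the mild-solution check. You also correctly use $\norm{v}_2$ rather than $\norm{v}_{H^1}$ in $(DF(R+v)-DF(R))\partial_xR$, so that the gap $\delta/2>\delta/3$ absorbs $\mu^{-3}$. The gap is in the terms with \emph{no} exponential margin. These are $\abs{R}^4\abs{v}$ in $L^2$ with weight $e^{\delta/(2\mu)}$, $\abs{R}^4\abs{\partial_xv}$ in $L^2$ with weight $e^{\delta/(3\mu)}$, and the corresponding terms in the difference estimate. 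Each requires a bound on
\[
 e^{c/\mu(t)}\int_t^T\mu(s)^{-2}e^{-c/\mu(s)}\,\mathrm{d}s
\]
with the \emph{same} $c$ in the weight and the integrand. Your integral lemma only gives $\frac{C}{\lambda}e^{-c'/\mu(t)}$ with $c'<c$. After multiplying by the weight you are left with $e^{(c-c')/\mu(t)}$, which is unbounded as $t\uparrow T$. A reduced exponent cannot be absorbed by the polynomial factor, because $\mu^{-2}$ works against you.

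Your fallback also fails. The integral $\int_t^T\mu(s)^{-2}\,\mathrm{d}s=\lambda^{-1}\int_0^{\mu(t)}\nu^{-2}\,\mathrm{d}\nu$ diverges; the value $\lambda^{-1}\mu(t)^{-1}$ is the integral toward the past, not toward $T$. Pulling out the monotone factor $e^{-c/\nu}$ or $\nu^{-1}e^{-c/\nu}$ at $s=t$ likewise leaves a divergent $\int_0\nu^{-2}$ or $\int_0\nu^{-1}$.

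\textbf{The missing idea.} The power $\mu^{-2}$, which is $\norm{R}_\infty^4$, matches exactly the derivative of the weight. Since $\frac{\mathrm{d}}{\mathrm{d}\nu}e^{-c/\nu}=c\,\nu^{-2}e^{-c/\nu}$, one has the exact identity
\[
 e^{c/r}\int_0^r\nu^{-2}e^{-c/\nu}\,\mathrm{d}\nu=\frac1c,
 \qquad\text{hence}\qquad
 e^{c/\mu(t)}\int_t^T\mu(s)^{-2}e^{-c/\mu(s)}\,\mathrm{d}s=\frac{1}{\lambda c}.
\]
This is how the paper closes the borderline terms, with $c=\beta=\delta/2$ at the $L^2$ level and $c=\gamma=\delta/3$ at the $H^1$ level. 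It gives contributions $C_A X/(\lambda\beta)$ and $C_A X/(\lambda\gamma)$, which are small for large $\lambda$.

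This is the heart of the lemma, not a technicality. It works only because the exponent is exactly $2$, the $L^2$-critical size of $\abs{R}^4$; any $\mu^{-2-\epsilon}$ would make the weighted integral unbounded. With this identity inserted at the $L^2$ level, at the first $H^1$ term, and in the contraction estimate, the rest of your argument goes through.
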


\begin{proof}
Put \(\beta=\delta/2\) and \(\gamma=\delta/3\).  If \(m\ge0\),
\(0<c<d\) and \(0<r\le\kappa\), then
\[
 e^{c/r}\int_0^r\rho^{-m}e^{-d/\rho}\dd\rho
 \le\int_0^\kappa\rho^{-m}e^{-(d-c)/\rho}\dd\rho<\infty.
\]
In the borderline case \(m=2\),
\[
 e^{c/r}\int_0^r\rho^{-2}e^{-c/\rho}\dd\rho=\frac1c.
\]
Since \(\rho=\mu(s)=\lambda(T-s)\) and
\(\dd s=-\dd\rho/\lambda\), it follows that
\begin{equation}\label{eq:weighted-integral}
 \sup_{0\le t<T}e^{c/\mu(t)}
 \int_t^T\mu(s)^{-m}e^{-d/\mu(s)}\dd s
 \le\frac{C}{\lambda}
\end{equation}
whenever \(d>c\), or when \(d=c\) and \(m=2\).

Let \(X=\norm{v}_{X_T}\) and
\(\Ncal_R(v)=F(R+v)-F(R)\).  By \eqref{eq:XT-norm} and
\eqref{eq:sobolev-graph},
\begin{equation}\label{eq:v-weighted-pointwise}
 \norm{v(s)}_2\le Xe^{-\beta/\mu(s)},\qquad
 \norm{v(s)}_{H^1}+\norm{v(s)}_\infty
 \le CXe^{-\gamma/\mu(s)}.
\end{equation}
Using \eqref{eq:F-Lipschitz},
\begin{equation}\label{eq:nonlinear-L2}
 \norm{\Ncal_R(v)(s)}_2
 \le C_A\mu(s)^{-2}Xe^{-\beta/\mu(s)}
 +CX^5e^{-(4\gamma+\beta)/\mu(s)}.
\end{equation}
On each edge the chain rule gives
\[
 \partial_x\Ncal_R(v)
 =DF(R+v)\partial_xv+[DF(R+v)-DF(R)]\partial_xR.
\]
Equations \eqref{eq:abstract-profile-bounds},
\eqref{eq:DF-Lipschitz} and \eqref{eq:v-weighted-pointwise} yield
\begin{align}
 \norm{\Ncal_R(v)(s)}_{H^1}
 \le C_A\bigl(&\mu(s)^{-2}Xe^{-\gamma/\mu(s)}
 +\mu(s)^{-3}Xe^{-\beta/\mu(s)}\notag\\
 &+\mu(s)^{-3/2}X^4e^{-(3\gamma+\beta)/\mu(s)}
 +X^5e^{-5\gamma/\mu(s)}\bigr).
 \label{eq:nonlinear-H1}
\end{align}
The four terms arise from
\(\abs{R}^4\abs{\partial_xv}\),
\(\abs{R}^3\abs{v}\abs{\partial_xR}\),
\(\abs{v}^4\abs{\partial_xR}\) and
\(\abs{v}^4\abs{\partial_xv}\), respectively.

Unitarity on \(L^2\), \eqref{eq:group-H1},
\eqref{eq:abstract-error-bound} and \eqref{eq:weighted-integral} give
\begin{align}
 \sup_{t<T}e^{\beta/\mu(t)}\norm{\Phi_R(v)(t)}_2
 &\le\frac{C_A}{\lambda}(1+X+X^5),\label{eq:Phi-L2}\\
 \sup_{t<T}e^{\gamma/\mu(t)}\norm{\Phi_R(v)(t)}_{H^1}
 &\le\frac{C_A}{\lambda}(1+X+X^4+X^5).
 \label{eq:Phi-H1}
\end{align}
The only borderline integrals are those with
\(\mu^{-2}e^{-\beta/\mu}\) at the \(L^2\) level and
\(\mu^{-2}e^{-\gamma/\mu}\) at the \(H^1\) level.  The term with
\(\mu^{-3}e^{-\beta/\mu}\) has the strict gap \(\beta>\gamma\).
Thus
\begin{equation}\label{eq:Phi-self}
 \norm{\Phi_R(v)}_{X_T}
 \le\frac{C_0}{\lambda}
 +\frac{C_1}{\lambda}(X+X^4+X^5).
\end{equation}

Let \(v,w\in X_T\), \(h=v-w\), \(Y=\norm{h}_{X_T}\) and
\(\norm{v}_{X_T},\norm{w}_{X_T}\le r\).  The same calculation gives
\begin{align*}
 \norm{\Ncal_R(v)-\Ncal_R(w)}_2
 \le{}&C_A\mu^{-2}Ye^{-\beta/\mu}
 +Cr^4Ye^{-(4\gamma+\beta)/\mu},\\
 \norm{\Ncal_R(v)-\Ncal_R(w)}_{H^1}
 \le C_AY\bigl(&
 \mu^{-2}e^{-\gamma/\mu}
 +\mu^{-3}e^{-\beta/\mu}\\
 &+r\mu^{-3/2}e^{-2\gamma/\mu}\\
 &+r^3\mu^{-3/2}e^{-(3\gamma+\beta)/\mu}\\
 &+r^4e^{-5\gamma/\mu}\bigr),
\end{align*}
where all functions are evaluated at the same time.  Therefore
\begin{equation}\label{eq:Phi-contraction}
 \norm{\Phi_R(v)-\Phi_R(w)}_{X_T}
 \le\frac{C_2}{\lambda}(1+r)^4\norm{v-w}_{X_T}.
\end{equation}

Set \(\rho_\lambda=2C_0/\lambda\).  For \(\lambda\) large,
\eqref{eq:Phi-self} maps \(B_{\rho_\lambda}\) into itself and
\eqref{eq:Phi-contraction} has contraction constant at most \(1/2\).
Banach's theorem gives the fixed point and the estimates encoded in
\eqref{eq:XT-norm}.  For \(0\le t_0\le t<T\), the fixed-point identity is
\[
 v(t)=S_\alpha(t-t_0)v(t_0)
 +i\int_{t_0}^tS_\alpha(t-s)
 \bigl(F(R+v)-F(R)+E_R\bigr)(s)\dd s.
\]
The definition of \(E_R\) gives the analogous identity for \(R\).
Adding them yields \eqref{eq:duhamel-solution} for \(u=R+v\).
\end{proof}

\begin{proposition}[Energy space construction]\label[proposition]{prop:energy-construction}
For \(\lambda\) sufficiently large, the profile \(R_\lambda\) from
\eqref{eq:cutoff-profile} admits a correction \(v_\lambda\in X_T\) such
that \(u_\lambda=R_\lambda+v_\lambda\) is a mild solution of
\eqref{eq:nls} and
\begin{equation}\label{eq:v-bounds}
 \norm{v_\lambda(t)}_2
 \le\rho_\lambda e^{-\delta/(2\mu(t))},\quad
 \norm{v_\lambda(t)}_{H^1}
 \le\rho_\lambda e^{-\delta/(3\mu(t))},\quad
 \rho_\lambda=\frac{2C_0}{\lambda}.
\end{equation}
\end{proposition}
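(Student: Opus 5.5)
This is a direct application of \cref{lem:terminal-perturbation} with $R=R_\lambda$. The plan is simply to check that $R_\lambda$ meets each hypothesis of that lemma, with a constant $A$ and a rate $\delta$ that do not depend on $\lambda$.

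\emph{Regularity.} By \cref{lem:profile-error}, $R_\lambda\in C([0,T);D(H_\alpha))\cap C^1([0,T);L^2(\G))$. The same lemma gives $E_\lambda=E_{R_\lambda}\in C([0,T);\E(\G))$; this uses that $E_\lambda$ vanishes near the vertex, so the continuity condition defining $\E(\G)$ holds trivially.

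\emph{Pointwise bounds.} The bounds \eqref{eq:abstract-profile-bounds} are exactly \eqref{eq:profile-Linfty}. We need the constant there to be uniform in $\lambda\ge1$. This follows from the proof of \cref{lem:profile-error}: the factors $\lambda^j$ produced by differentiating the phase $\theta_\lambda$ are absorbed using $\lambda\mu(t)\le\kappa$. Hence the constant depends only on $\kappa$, $Q$ and $\chi$. For $\partial_xR_\lambda=\chi'\widetilde R_\lambda+\chi\partial_x\widetilde R_\lambda$, the extra term is bounded by $C\mu^{-1/2}\le C\kappa\,\mu^{-3/2}$, since $\mu\le\kappa/\lambda\le\kappa$.

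\emph{Error bound.} The bound \eqref{eq:abstract-error-bound} is \eqref{eq:error-H1}. Again we must confirm that $C$ and $\delta$ are independent of $\lambda$. In \eqref{eq:error-explicit} every term is supported at distance at least $d_0=a/2$ from $a$. On that set, \eqref{eq:profile-derivatives} with $k\le2$ gives pointwise bounds $C\mu^{-5/2}e^{-cd_0/\mu}$, and the quintic term gives $C\mu^{-5/2}e^{-5cd_0/\mu}$. The supports are bounded, so the $H^1$ norm is at most $C\mu^{-m}e^{-cd_0/\mu}$ for some fixed $m$. Using $\mu^{-m}e^{-d/\mu}\le C_{m,d}e^{-d/(2\mu)}$, we take $\delta=cd_0/2$ and $A$ equal to the maximum of the constants obtained.

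\emph{Conclusion.} Applying \cref{lem:terminal-perturbation} produces $C_0$ and $\lambda_0$ depending only on $A,\delta,\kappa,\alpha$, together with a fixed point $v_\lambda\in B_{\rho_\lambda}$ for every $\lambda\ge\lambda_0$. The lemma then says $u_\lambda=R_\lambda+v_\lambda$ is a mild solution. The bounds \eqref{eq:v-bounds} follow directly from $\norm{v_\lambda}_{X_T}\le\rho_\lambda$ and the definition \eqref{eq:XT-norm} of the weighted norm.

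The only delicate point is the $\lambda$-uniformity of $A$ and $\delta$ in the second and third steps, since everything else is bookkeeping. It is handled by the constraint $\lambda\mu\le\kappa$ and by the fact that the cut-off $\chi$ is fixed, independent of $\lambda$.
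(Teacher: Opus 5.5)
Your proposal is correct and follows the paper's argument, which simply notes that the bounds of \cref{lem:profile-error} verify the hypotheses of \cref{lem:terminal-perturbation}; your explicit check that $A$ and $\delta$ are independent of $\lambda$, via $\lambda\mu(t)\le\kappa$ and the fixed cut-off $\chi$, is exactly what that argument relies on. The paper also remarks that, by energy-class uniqueness, this mild solution is the restriction of the maximal energy solution, so the conservation laws of \cref{prop:framework} apply to it; this is used later for the mass identity but is not needed for the statement as posed.
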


\begin{proof}
The bounds in \cref{lem:profile-error} verify the hypotheses of
\cref{lem:terminal-perturbation}; its fixed point gives the result.
By energy-class uniqueness, this mild solution is the restriction of the
maximal energy solution, so the conservation laws in
\cref{prop:framework} apply to it.
\end{proof}

\begin{proposition}[Localization and blow-up rate]\label[proposition]{prop:localization}
The solution in \cref{prop:energy-construction} satisfies
\eqref{eq:local-gradient-asymptotic}, \eqref{eq:active-away} and
\eqref{eq:inactive-edge}.
\end{proposition}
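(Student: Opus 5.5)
The plan is to work directly from the decomposition $u_\lambda=R_\lambda+v_\lambda$ given by \cref{prop:energy-construction}, together with the bounds \eqref{eq:v-bounds}. Since $\norm{v_\lambda(t)}_{H^1(\G)}\le\rho_\lambda e^{-\delta/(3\mu(t))}\to0$ as $t\uparrow T_\lambda$ (that is, as $\mu\downarrow0$), every statement reduces to the corresponding statement for the explicit profile $R_\lambda=(\chi\widetilde R_\lambda,0)$. The error introduced by $v_\lambda$ is $o(1)$ in $H^1$, and it stays $o(1)$ even after multiplication by any power $\mu^{-m}$.

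For \eqref{eq:inactive-edge}, note that $R_{\lambda,2}=0$, so $u_{\lambda,2}=v_{\lambda,2}$, and the $H^1$ bound gives the claim at once.

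For \eqref{eq:active-away}, take a compact $K\Subset[0,\infty)\setminus\{a\}$. Then $d:=\dist(K,a)>0$, and on $K$ we have $\abs{\chi}\le1$ and $\abs{\chi'}\le C$. Hence \eqref{eq:profile-derivatives} with $k=0,1$ gives
\[
 \norm{\chi\widetilde R_\lambda}_{H^1(K)}\le C\abs{K}^{1/2}\mu^{-3/2}e^{-cd/\mu}\to0 .
\]
Adding the contribution of $v_\lambda$ finishes this part.

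For \eqref{eq:local-gradient-asymptotic}, fix $0<\eta<a/2$. Then $I_\eta\subset(a/2,3a/2)$, where $\chi\equiv1$, so on $I_\eta$ we have $u_{\lambda,1}=\widetilde R_\lambda+v_{\lambda,1}$. By the triangle inequality,
\[
 \abs{\mu\norm{\partial_xu_{\lambda,1}}_{L^2(I_\eta)}-\mu\norm{\partial_x\widetilde R_\lambda}_{L^2(I_\eta)}}\le\mu\rho_\lambda e^{-\delta/(3\mu)}\to0,
\]
so it suffices to compute the profile term. Differentiating \eqref{eq:profile} gives
\[
 \partial_x\widetilde R_\lambda=\mu^{-3/2}e^{i\theta_\lambda}\Bigl(Q'(y)-\tfrac{i\lambda\mu}{2}\,y\,Q(y)\Bigr),\qquad y=\frac{x-a}{\mu}.
\]
After the change of variables $x\mapsto y$,
\[
 \mu^2\norm{\partial_x\widetilde R_\lambda}_{L^2(I_\eta)}^2=\int_{\abs{y}<\eta/\mu}\Bigl(\abs{Q'(y)}^2+\tfrac{\lambda^2\mu^2}{4}y^2Q(y)^2\Bigr)\dd y .
\]
The cross term drops out because $Q$ is real, so the two parts of the bracket are real and purely imaginary, respectively.

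The point needing care is the phase term. Here $\lambda\mu=\lambda^2(T-t)\to0$ as $t\uparrow T$, so this term tends to zero: indeed $\int y^2Q^2<\infty$ by the exponential decay of $Q$. Meanwhile the first integral tends to $\norm{Q'}_{L^2(\R)}^2$ by monotone convergence as $\eta/\mu\to\infty$. This gives \eqref{eq:local-gradient-asymptotic}.

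I expect no real obstacle; the essential ingredient is that $\lambda$ is fixed while $\mu\to0$. The main subtlety is not to use the cruder bound \eqref{eq:profile-derivatives}, which only gives $O(\mu^{-3/2})$ and does not capture the exact constant. The explicit computation above is needed to show that the chirp contribution $\lambda\mu\, yQ$ vanishes in the limit.
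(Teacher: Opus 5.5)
Your proposal is correct and follows the paper's proof essentially step for step. You handle the inactive edge via $u_{\lambda,2}=v_{\lambda,2}$ and the region away from $a$ via \eqref{eq:profile-derivatives} together with \eqref{eq:v-bounds}; for the gradient asymptotic you use the explicit formula for $\partial_x\widetilde R_\lambda$, the change of variables $y=(x-a)/\mu$, the vanishing cross term since $Q$ is real, $\lambda\mu(t)\to0$, and the (reverse) triangle inequality to absorb $v_{\lambda,1}$.
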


\begin{proof}
Since \(R_{\lambda,2}=0\), we have \(u_{\lambda,2}=v_{\lambda,2}\) and
\eqref{eq:inactive-edge} follows from \eqref{eq:v-bounds}.

Fix \(0<\eta<a/2\).  On \(I_\eta=(a-\eta,a+\eta)\), \(\chi=1\).
With \(y=(x-a)/\mu(t)\), differentiation of \eqref{eq:profile} gives
\begin{equation}\label{eq:profile-derivative-formula}
 \partial_x\widetilde R_\lambda(t,x)
 =\mu(t)^{-3/2}
 \left(Q'(y)-\frac{i}{2}\lambda\mu(t)yQ(y)\right)
 e^{i\theta_\lambda(t,x)}.
\end{equation}
Since \(Q,Q'\) are real,
\begin{align}
 \mu(t)^2\norm{\partial_x\widetilde R_\lambda(t)}_{L^2(I_\eta)}^2
 ={}&\int_{-\eta/\mu(t)}^{\eta/\mu(t)}\abs{Q'(y)}^2\dd y\notag\\
 &+\frac{\lambda^2\mu(t)^2}{4}
 \int_{-\eta/\mu(t)}^{\eta/\mu(t)}y^2Q(y)^2\dd y.
 \label{eq:profile-gradient-limit}
\end{align}
The first term tends to \(\norm{Q'}_2^2\) and the second tends to zero.
The bound \eqref{eq:v-bounds} also gives
\[
 \mu(t)\norm{\partial_xv_{\lambda,1}(t)}_2\longrightarrow0.
\]
The reverse triangle inequality proves
\eqref{eq:local-gradient-asymptotic}.

Let \(K\Subset[0,\infty)\setminus\{a\}\) and
\(d_K=\dist(K,\{a\})>0\).  From \eqref{eq:profile-derivatives},
\begin{equation}\label{eq:R-away}
 \norm{R_{\lambda,1}(t)}_{H^1(K)}
 \le C_K\bigl(\mu(t)^{-1/2}+\mu(t)^{-3/2}\bigr)
 e^{-cd_K/\mu(t)}\longrightarrow0.
\end{equation}
The same conclusion for \(v_{\lambda,1}\) follows from
\eqref{eq:v-bounds}, proving \eqref{eq:active-away}.  In particular, the
local divergence at \(a\) and the local bounds away from \(a\) show that the
blow-up set is precisely the point \(a\) on edge \(1\).  The divergence of
the full energy norm and the blow-up alternative show that the solution
cannot be continued through \(T\).
\end{proof}

\begin{proposition}[Mass and concentration]\label[proposition]{prop:mass-concentration}
For the constructed solution,
\[
 M(u_\lambda(t))=M_{\R}(Q)\qquad(0\le t<T),
\]
and \eqref{eq:mass-concentration} holds.
\end{proposition}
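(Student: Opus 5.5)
Since \cref{prop:energy-construction} identifies \(u_\lambda\) with the maximal energy solution, \cref{prop:framework} gives conservation of mass. Hence \(M(u_\lambda(t))\) is constant on \([0,T)\), and it suffices to compute its limit as \(t\uparrow T\). I will write \(u_\lambda=R_\lambda+v_\lambda\) and use \eqref{eq:v-bounds}: \(\norm{v_\lambda(t)}_2\le\rho_\lambda e^{-\delta/(2\mu(t))}\to0\). By the triangle inequality in \(L^2(\G)\), it is then enough to show
\[
\norm{R_\lambda(t)}_{L^2(\G)}^2=\int_0^\infty\chi(x)^2\abs{\widetilde R_\lambda(t,x)}^2\dd x\longrightarrow\norm{Q}_{L^2(\R)}^2 .
\]
This will give \(M(u_\lambda(t))=\lim_{s\uparrow T}M(u_\lambda(s))=\tfrac12\norm{Q}_2^2=M_\R(Q)\).

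For the profile limit, I change variables \(y=(x-a)/\mu(t)\). This gives
\[
\norm{R_\lambda(t)}_2^2=\int_{-a/\mu(t)}^{\infty}\chi(a+\mu(t) y)^2Q(y)^2\dd y .
\]
The integrand is bounded by \(Q(y)^2\in L^1(\R)\). It converges pointwise to \(Q(y)^2\), because \(\chi(a+\mu y)\to\chi(a)=1\) as \(\mu\to0\); equivalently, \(\chi=1\) on \((a/2,3a/2)\), so the integrand equals \(Q^2\) on \(\abs{y}<a/(2\mu)\). Dominated convergence gives the limit \(\norm{Q}_2^2\). Alternatively, one can bound the discrepancy explicitly by \(\int_{\abs y\ge a/(2\mu)}Q^2\le Ce^{-ca/\mu}\), using the exponential decay in \cref{lem:euclidean-profile}.

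For \eqref{eq:mass-concentration}, I fix \(\eps>0\) and may assume \(\eps<a/2\), since the integral is monotone in \(\eps\) and bounded above by \(\norm{u_\lambda(t)}_2^2=\norm{Q}_2^2\). On \(J_\eps=\{\abs{x-a}\le\eps\}\) we have \(\chi=1\). The same change of variables then gives
\[
\norm{\widetilde R_\lambda(t)}_{L^2(J_\eps)}^2=\int_{\abs y\le\eps/\mu(t)}Q^2\dd y\to\norm{Q}_2^2 .
\]
The correction satisfies \(\norm{v_{\lambda,1}(t)}_{L^2(J_\eps)}\le\norm{v_\lambda(t)}_2\to0\), and the reverse triangle inequality concludes.

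I do not expect a real obstacle here. The only point requiring care is that mass conservation applies to the mild solution produced by the fixed point. This is exactly the energy-class uniqueness remark in \cref{prop:energy-construction}, so no approximation argument is needed.
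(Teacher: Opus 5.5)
Your proposal is correct and follows essentially the same route as the paper: the change of variables $y=(x-a)/\mu(t)$ with dominated convergence gives $\norm{R_\lambda(t)}_2\to\norm{Q}_2$, you use $\norm{v_\lambda(t)}_2\to0$ and mass conservation (via energy-class uniqueness) for the identity, and a squeeze on a small interval where $\chi=1$, bounded above by $\norm{u_\lambda(t)}_2^2=\norm{Q}_2^2$, gives the concentration. The only difference is cosmetic: the paper chooses $0<\eta<\min\{\eps,a/2\}$ explicitly rather than invoking monotonicity in $\eps$, which is the same argument.
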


\begin{proof}
Extend \(\chi\) by zero to \(\R\).  With \(y=(x-a)/\mu(t)\),
\begin{equation}\label{eq:R-mass-limit}
 M(R_\lambda(t))
 =\frac12\int_\R\abs{\chi(a+\mu(t)y)}^2Q(y)^2\dd y
 \longrightarrow M_{\R}(Q)
\end{equation}
by dominated convergence.  Since \(v_\lambda(t)\to0\) in \(L^2\),
\(M(u_\lambda(t))\to M_{\R}(Q)\).  Mass conservation makes
\(M(u_\lambda(t))\) constant, proving the identity for all \(t<T\).

Fix \(\eps>0\) and choose \(0<\eta<\min\{\eps,a/2\}\).  On \(I_\eta\),
\[
 \int_{\abs{x-a}\le\eta}\abs{R_{\lambda,1}(t,x)}^2\dd x
 =\int_{-\eta/\mu(t)}^{\eta/\mu(t)}Q(y)^2\dd y
 \longrightarrow\norm Q_2^2.
\]
The difference between this integral and the one for
\(u_{\lambda,1}=R_{\lambda,1}+v_{\lambda,1}\) is bounded by
\[
 2\norm{R_{\lambda,1}(t)}_{L^2(I_\eta)}
 \norm{v_{\lambda,1}(t)}_2+\norm{v_{\lambda,1}(t)}_2^2,
\]
which tends to zero. Moreover,
\[
 \int_{I_\eta}|u_{\lambda,1}(t,x)|^2\dd x
 \le
 \int_{\{x\ge0:\,|x-a|\le\eps\}}
 |u_{\lambda,1}(t,x)|^2\dd x
 \le \norm{u_\lambda(t)}_{L^2(\G)}^2
 =2M_{\R}(Q)=\norm{Q}_{L^2(\R)}^2.
\]
Since the first integral converges to
$\norm{Q}_{L^2(\R)}^2$, squeezing proves
\eqref{eq:mass-concentration}.
\end{proof}

\section{Persistence of the operator domain}\label{sec:regularity}

The solution constructed above is initially known only in the energy space.
We now prove that it is a strong solution.  Constants in this section may
depend on the fixed \(\lambda\), but all estimates used in the limiting
argument are uniform in the approximation index and in the difference
quotient.

\begin{lemma}[Uniform final-time contraction]\label[lemma]{lem:finite-time}
For \(0<\tau<T\), define
\[
 \Phi_\tau(v)(t):=-i\int_t^\tau S_\alpha(t-s)
 \bigl(F(R_\lambda+v)-F(R_\lambda)+E_\lambda\bigr)(s)\dd s,
 \qquad 0\le t\le\tau,
\]
and equip \(C([0,\tau];\E(\G))\) with the norm
\[
 \norm v_{X_\tau}:=\sup_{0\le t\le\tau}
 \left(e^{\delta/(2\mu(t))}\norm{v(t)}_2
 +e^{\delta/(3\mu(t))}\norm{v(t)}_{H^1}\right).
\]
For the same sufficiently large \(\lambda\) as above, the ball radius
\(\rho_\lambda\) and a contraction constant \(q_\lambda<1\) can be chosen
independently of \(\tau\).  Consequently \(\Phi_\tau\) has a unique fixed
point \(v_{\lambda,\tau}\) in that ball.
\end{lemma}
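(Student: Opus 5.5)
The plan is to rerun the proof of \cref{lem:terminal-perturbation} essentially verbatim on the truncated interval $[0,\tau]$, and to check that every constant produced there is controlled uniformly in $\tau$. The only place where the final time enters is the time integral $\int_t^\tau$, which replaces $\int_t^T$. All integrands are nonnegative, and $\mu(s)=\lambda(T-s)$ is still defined in terms of the original $T$ (not $\tau$). Hence every integral $\int_t^\tau \mu(s)^{-m}e^{-d/\mu(s)}\dd s$ is bounded by the corresponding integral over $[t,T)$. So the weighted integral bound \eqref{eq:weighted-integral} holds with the same constant $C/\lambda$, uniformly in $\tau\in(0,T)$ and $t\in[0,\tau]$. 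The pointwise-in-time ingredients do not see $\tau$ at all: the profile bounds \eqref{eq:profile-Linfty}, the error bound \eqref{eq:error-H1}, the group bound \eqref{eq:group-H1}, and the nonlinear estimates \eqref{eq:nonlinear-L2}--\eqref{eq:nonlinear-H1}.

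Concretely, I first check that $\Phi_\tau$ maps $C([0,\tau];\E(\G))$ into itself. Since $F(R_\lambda+v)-F(R_\lambda)+E_\lambda\in C([0,\tau];\E(\G))$ is bounded on the compact interval $[0,\tau]$ (where $\mu\ge\lambda(T-\tau)>0$), and $S_\alpha$ is a strongly continuous group on $\E(\G)$ with bound \eqref{eq:group-H1}, the Duhamel integral is continuous in $t$ with values in $\E(\G)$. Next I repeat the two estimates. The self-mapping bound \eqref{eq:Phi-self} becomes $\norm{\Phi_\tau(v)}_{X_\tau}\le C_0/\lambda+C_1\lambda^{-1}(X+X^4+X^5)$. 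The difference bound \eqref{eq:Phi-contraction} becomes $\norm{\Phi_\tau(v)-\Phi_\tau(w)}_{X_\tau}\le C_2\lambda^{-1}(1+r)^4\norm{v-w}_{X_\tau}$. In both, $C_0,C_1,C_2$ are exactly the constants from \cref{lem:terminal-perturbation}, because the borderline case ($d=c$, $m=2$) gives at most $1/c$ and the strict-gap cases give at most $\int_0^\kappa\rho^{-m}e^{-(d-c)/\rho}\dd\rho$. Both bounds are monotone in the upper limit of integration.

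I then take the same $\rho_\lambda=2C_0/\lambda$ and the same $\lambda_0$. For $\lambda\ge\lambda_0$ the ball $B_{\rho_\lambda}\subset X_\tau$ is mapped into itself and $q_\lambda:=C_2\lambda^{-1}(1+\rho_\lambda)^4\le 1/2$, independently of $\tau$. Since $X_\tau$ is complete (on $[0,\tau]$ the weights are bounded above and below, so it is just $C([0,\tau];\E(\G))$ with an equivalent norm), Banach's fixed point theorem gives the unique fixed point $v_{\lambda,\tau}$.

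I expect no real obstacle. The one point to state carefully is that the weights are built from $\mu$ relative to $T$, not $\tau$, so that truncation only shrinks the integrals. I will also point out that the norm equivalence constant \eqref{eq:norm-equivalence} is irrelevant here, because only \eqref{eq:group-H1} and unitarity are used.
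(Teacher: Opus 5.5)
Your proposal is correct and follows the paper's proof. The paper likewise says the argument of \cref{lem:terminal-perturbation} is unchanged: since $\mu$ is still measured from $T$, each nonnegative integral $\int_t^\tau\mu(s)^{-m}e^{-c/\mu(s)}\dd s$ is bounded by $\lambda^{-1}\int_0^{\mu(t)}r^{-m}e^{-c/r}\dd r$, so \eqref{eq:Phi-self}--\eqref{eq:Phi-contraction} hold with $\tau$-independent constants. Your extra checks are valid details the paper leaves implicit: that $\Phi_\tau$ maps $C([0,\tau];\E(\G))$ to itself, and that $X_\tau$ is complete because the weights are bounded above and below on $[0,\tau]$.
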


\begin{proof}
The proof of \cref{lem:terminal-perturbation} is unchanged with \(\tau\) in
place of \(T\).  Indeed, every nonnegative integral is only shortened and
\[
 \int_t^\tau\mu(s)^{-m}e^{-c/\mu(s)}\dd s
 =\frac1\lambda\int_{\mu(\tau)}^{\mu(t)}
 r^{-m}e^{-c/r}\dd r
 \le\frac1\lambda\int_0^{\mu(t)}r^{-m}e^{-c/r}\dd r.
\]
Thus \eqref{eq:Phi-self}--\eqref{eq:Phi-contraction} hold with constants
independent of \(\tau\).
\end{proof}

\begin{proposition}[Strong terminal approximants]\label[proposition]{prop:approximants}
Let \(t_n\uparrow T\).  For each \(n\), let \(u_n\) be the maximal strong
solution of \eqref{eq:nls} with terminal datum
\[
 u_n(t_n)=R_\lambda(t_n).
\]
Its maximal interval contains \([0,t_n]\) and
\[
 v_{\lambda,n}:=u_n-R_\lambda
\]
is the fixed point of \(\Phi_{t_n}\).  In particular,
\begin{equation}\label{eq:approximant-v-bounds}
 \norm{v_{\lambda,n}(t)}_2
 \le\rho_\lambda e^{-\delta/(2\mu(t))},\qquad
 \norm{v_{\lambda,n}(t)}_{H^1}
 \le\rho_\lambda e^{-\delta/(3\mu(t))}
\end{equation}
on \([0,t_n]\), uniformly in \(n\).
\end{proposition}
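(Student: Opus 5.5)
The plan is to compare $u_n$ with the fixed point of $\Phi_{t_n}$ from \cref{lem:finite-time}, using uniqueness of energy solutions.

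\textbf{Step 1: $u_n$ is a strong solution.} Since $R_\lambda(t_n)\in D(H_\alpha)$ by \cref{lem:profile-error}, \cref{prop:framework} gives a unique maximal energy solution with terminal datum $R_\lambda(t_n)$. The equation is time reversible, so solving backward poses no difficulty. By the persistence statement in \cref{prop:framework}, this solution is strong on its whole maximal interval $I_n\ni t_n$.

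\textbf{Step 2: build the candidate from the fixed point.} Let $w_n:=v_{\lambda,t_n}$ be the fixed point of $\Phi_{t_n}$ given by \cref{lem:finite-time}. Its radius $\rho_\lambda$ is independent of $\tau=t_n$. Set $\tilde u_n:=R_\lambda+w_n$ on $[0,t_n]$. Then $w_n(t_n)=0$, so $\tilde u_n(t_n)=R_\lambda(t_n)$.

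Next I check that $\tilde u_n$ is a mild solution on $[0,t_n]$. This follows exactly as at the end of the proof of \cref{lem:terminal-perturbation}. Rewrite the fixed-point identity between times $t_0\le t\le t_n$. Since $R_\lambda\in C^1([0,T);L^2)\cap C([0,T);D(H_\alpha))$, it satisfies the Duhamel formula with forcing $E_\lambda$. Adding the two identities gives \eqref{eq:duhamel-solution} for $\tilde u_n$, and $\tilde u_n\in C([0,t_n];\E(\G))$ because $w_n\in X_{t_n}$.

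\textbf{Step 3: identify $\tilde u_n$ with $u_n$.} This is the step needing the most care. By uniqueness of mild solutions in $C(I;\E(\G))$ from \cref{prop:framework}, $\tilde u_n=u_n$ on $[0,t_n]\cap I_n$.

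It remains to see that $I_n\supset[0,t_n]$. Suppose the backward existence time $T_n^-$ satisfied $T_n^-\ge0$ with the solution not extendable there. By the blow-up alternative, $\|u_n(t)\|_{H^1}$ would be unbounded as $t\downarrow T_n^-$. However, on $(T_n^-,t_n]$ we have $u_n=\tilde u_n$, and $\tilde u_n$ is bounded in $H^1$ on the compact interval $[0,t_n]$. The $H^1$ bound on $R_\lambda$ there comes from \eqref{eq:profile-derivatives}, and the bound on $w_n$ from its weighted norm. This is a contradiction, so $[0,t_n]\subset I_n$.

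\textbf{Step 4: conclusion.} We get $v_{\lambda,n}=u_n-R_\lambda=w_n$, which is the fixed point of $\Phi_{t_n}$. Membership of $w_n$ in the ball $\norm{w_n}_{X_{t_n}}\le\rho_\lambda$ is precisely \eqref{eq:approximant-v-bounds} on $[0,t_n]$. Since $\rho_\lambda$ does not depend on $n$ by \cref{lem:finite-time}, the bounds are uniform in $n$.

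The only delicate point is the continuation argument in Step 3, namely running the blow-up alternative backward in time. Because the flow is time reversible, the argument is symmetric to the forward case.
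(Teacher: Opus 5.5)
Your proposal is correct and follows the paper's argument. You apply strong Cauchy theory at $t_n$, take the $\tau$-uniform fixed point of $\Phi_{t_n}$ from \cref{lem:finite-time} as a mild solution with the same terminal datum, identify the two by energy-class uniqueness, and use the backward blow-up alternative, contradicted by the fixed-point $H^1$ bound, to show $[0,t_n]$ lies in the maximal interval. Your explicit check that $R_\lambda+v_{\lambda,t_n}$ satisfies the Duhamel formula, with the $E_\lambda$ forcing cancelling, spells out a step the paper leaves implicit.
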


\begin{proof}
Since \(R_\lambda(t_n)\in D(H_\alpha)\), the strong Cauchy theory in
\cref{prop:framework} gives a unique maximal strong solution through
\(t_n\).  On the other hand, \cref{lem:finite-time} produces a mild
solution
\[
 \widetilde u_n=R_\lambda+v_{\lambda,t_n}
 \quad\text{on }[0,t_n],\qquad
 \widetilde u_n(t_n)=R_\lambda(t_n).
\]
Energy-class uniqueness identifies the two solutions on their common
interval.  If the maximal strong interval had a finite left endpoint in
\([0,t_n)\), the blow-up alternative would force the energy norm to diverge
there.  This contradicts the fixed-point bound on \(\widetilde u_n\).
Hence the interval contains \([0,t_n]\) and uniqueness of the fixed point
gives \(v_{\lambda,n}=v_{\lambda,t_n}\), together with
\eqref{eq:approximant-v-bounds}.
\end{proof}

\begin{proposition}[Convergence of the approximants]\label[proposition]{prop:approximant-convergence}
For every \(T'<T\),
\begin{equation}\label{eq:approximant-convergence}
 u_n\longrightarrow u_\lambda
 \qquad\text{in }C([0,T'];L^2(\G)).
\end{equation}
\end{proposition}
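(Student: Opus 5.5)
Since $u_n-u_\lambda = v_{\lambda,n}-v_\lambda$ on $[0,t_n]$, it suffices to show that $v_{\lambda,n}\to v_\lambda$ in $C([0,T'];L^2(\G))$ for each fixed $T'<T$. I will estimate the difference in the weighted norm, exploiting the uniform contraction of \cref{lem:finite-time}.

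For $n$ large we have $t_n>T'$. Restrict $v_\lambda$ to $[0,t_n]$ and compare it with the fixed point of $\Phi_{t_n}$. Since $v_\lambda=\Phi_R(v_\lambda)$ with terminal time $T$, splitting $\int_t^T=\int_t^{t_n}+\int_{t_n}^T$ gives, for $t\le t_n$,
\[
 v_\lambda(t)=\Phi_{t_n}(v_\lambda|_{[0,t_n]})(t)+S_\alpha(t-t_n)\,r_n,
\]
where
\[
 r_n:=-i\int_{t_n}^T S_\alpha(t_n-s)\bigl(F(R_\lambda+v_\lambda)-F(R_\lambda)+E_\lambda\bigr)(s)\dd s .
\]
Equivalently, $r_n=v_\lambda(t_n)$, by the fixed-point identity evaluated at $t=t_n$.

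Subtracting $v_{\lambda,n}=\Phi_{t_n}(v_{\lambda,n})$, both functions lie in the ball of radius $\rho_\lambda$ for the $X_{t_n}$ norm; for $v_\lambda$ this holds because its $X_T$ norm dominates its $X_{t_n}$ norm. The contraction bound with constant $q_\lambda\le 1/2$, uniform in $\tau$, then yields
\[
 \norm{v_\lambda-v_{\lambda,n}}_{X_{t_n}}\le q_\lambda\norm{v_\lambda-v_{\lambda,n}}_{X_{t_n}}+\norm{S_\alpha(\cdot-t_n)v_\lambda(t_n)}_{X_{t_n}},
\]
so that
\[
 \norm{v_\lambda-v_{\lambda,n}}_{X_{t_n}}\le 2\norm{S_\alpha(\cdot-t_n)v_\lambda(t_n)}_{X_{t_n}}.
\]

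The main obstacle is controlling this last term. The weights $e^{\delta/(2\mu(t))}$ and $e^{\delta/(3\mu(t))}$ are largest near $t=t_n$, while $S_\alpha(t-t_n)v_\lambda(t_n)$ does not decay as $t$ moves away from $t_n$. Hence the $X_{t_n}$ norm of this term is of order
\[
 e^{\delta/(2\mu(t_n))}\norm{v_\lambda(t_n)}_2+e^{\delta/(3\mu(t_n))}\norm{v_\lambda(t_n)}_{H^1},
\]
which is merely bounded by $\rho_\lambda$ and need not tend to zero.

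I therefore abandon the full weighted norm and measure the difference only on $[0,T']$. Set $w_n:=v_\lambda-v_{\lambda,n}$. Subtracting the two Duhamel formulas written from the terminal time $t_n$ gives, for $t\le t_n$,
\[
 w_n(t)=S_\alpha(t-t_n)v_\lambda(t_n)-i\int_t^{t_n}S_\alpha(t-s)\bigl(\Ncal(v_\lambda)-\Ncal(v_{\lambda,n})\bigr)(s)\dd s .
\]
I estimate this in $L^2$ only. By \eqref{eq:F-Lipschitz}, together with the uniform $L^\infty$ bounds on $R_\lambda$, $v_\lambda$ and $v_{\lambda,n}$ (which follow from \eqref{eq:approximant-v-bounds} and \eqref{eq:sobolev-graph}),
\[
 \norm{w_n(t)}_2\le\norm{v_\lambda(t_n)}_2+C\int_t^{t_n}\mu(s)^{-2}\norm{w_n(s)}_2\dd s .
\]

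The factor $\mu^{-2}$ is not integrable up to $T$, so I use the weighted $L^2$ quantity $\sup_{s\le t_n}e^{\delta/(2\mu(s))}\norm{w_n(s)}_2$ instead, which is exactly the $L^2$ part of the earlier contraction estimate. This gives
\[
 \sup_{s\le t_n}e^{\delta/(2\mu(s))}\norm{w_n(s)}_2\le q\,\sup_{s\le t_n}e^{\delta/(2\mu(s))}\norm{w_n(s)}_2+\sup_{t\le t_n}e^{\delta/(2\mu(t))}\norm{v_\lambda(t_n)}_2 .
\]
The last term equals $e^{\delta/(2\mu(t_n))}\norm{v_\lambda(t_n)}_2$, which is again only bounded. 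However, the $L^2$ estimate \eqref{eq:nonlinear-L2} involves only the $H^1$ part through $\norm{v}_\infty$, which I bound uniformly, and the borderline integral $\int\mu^{-2}e^{-\beta/\mu}$ with weight $e^{\beta/\mu}$ has constant $1/(\lambda\beta)$. I can therefore switch to a smaller weight $e^{\beta'/\mu}$ with $\beta'<\delta/2$, at the cost of a bounded polynomial factor, and still obtain a contraction with constant $C/\lambda$.

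With this weight the source term becomes
\[
 e^{\beta'/\mu(t_n)}\norm{v_\lambda(t_n)}_2\le\rho_\lambda e^{-(\delta/2-\beta')/\mu(t_n)}\to0 .
\]
Hence $\sup_{t\le t_n}e^{\beta'/\mu(t)}\norm{w_n(t)}_2\to0$. Since $e^{\beta'/\mu(t)}\ge1$, this gives uniform $L^2$ convergence on $[0,t_n]\supset[0,T']$, which is \eqref{eq:approximant-convergence}.
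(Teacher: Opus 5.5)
Your final argument is correct and is essentially the paper's: you use the same backward Duhamel identity for $v_\lambda-v_{\lambda,n}$ from $t_n$, with source $S_\alpha(\cdot-t_n)v_\lambda(t_n)$, and the same $L^2$ Lipschitz bound $C\mu^{-2}\norm{v_\lambda-v_{\lambda,n}}_2$. Where the paper applies Gronwall and lets $\norm{v_\lambda(t_n)}_2\le\rho_\lambda e^{-\delta/(2\mu(t_n))}$ beat the growth $e^{C/(\lambda\mu(t_n))}$, you absorb via the supremum weighted by $e^{\beta'/\mu}$; this amounts to the same largeness condition $C/\lambda<\beta'<\delta/2$ on $\lambda$ as the paper's $\delta/2-C/\lambda>0$, and you were right to discard the full $X_{t_n}$-norm attempt.
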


\begin{proof}
For \(n\) large, \(T'<t_n\).  Set
\[
 d_n=u_\lambda-u_n=v_\lambda-v_{\lambda,n},\qquad
 \mu_n=\mu(t_n),\qquad \beta=\frac{\delta}{2}.
\]
Since $u_\lambda$ and $u_n$ are mild solutions,
\eqref{eq:duhamel-solution} gives, for $0\le t\le t_n$,
\[
 d_n(t)=S_\alpha(t-t_n)d_n(t_n)
 -i\int_t^{t_n}S_\alpha(t-s)
 \bigl(F(u_\lambda(s))-F(u_n(s))\bigr)\dd s.
\]
The bounds \eqref{eq:profile-Linfty}, \eqref{eq:v-bounds}
and \eqref{eq:approximant-v-bounds} imply
\[
 \norm{u_\lambda(t)}_\infty+\norm{u_n(t)}_\infty
 \le C\mu(t)^{-1/2},
\]
where $C$ is independent of $n$ and of sufficiently large
$\lambda$.
Hence
\[
 \norm{F(u_\lambda(t))-F(u_n(t))}_2
 \le C\mu(t)^{-2}\norm{d_n(t)}_2.
\]
Backward Duhamel and Gronwall give
\begin{align*}
 \norm{d_n(t)}_2
 &\le\norm{d_n(t_n)}_2
 \exp\!\left(C\int_t^{t_n}\mu(s)^{-2}\dd s\right)\\
 &\le\rho_\lambda
 \exp\!\left(
 -\frac{\beta-C/\lambda}{\mu_n}
 -\frac{C}{\lambda\mu(t)}\right).
\end{align*}
Here \(d_n(t_n)=v_\lambda(t_n)\) and
\[
 \int_t^{t_n}\mu(s)^{-2}\dd s
 =\frac1\lambda\left(\frac1{\mu_n}-\frac1{\mu(t)}\right).
\]
Increasing \(\lambda_0\) so that \(\beta-C/\lambda>0\), the last bound
tends to zero uniformly on \([0,T']\), proving
\eqref{eq:approximant-convergence}.
\end{proof}

\begin{proposition}[Difference-quotient reduction]\label[proposition]{prop:difference-equation}
For \(0<h<t_n\), set
\[
 z_{n,h}(t):=\frac{v_{\lambda,n}(t+h)-v_{\lambda,n}(t)}{h},
 \qquad
 r_{\lambda,h}(t):=\frac{R_\lambda(t+h)-R_\lambda(t)}{h}.
\]
Then
\[
 z_{n,h}\in C([0,t_n-h];D(H_\alpha))
 \cap C^1([0,t_n-h];L^2(\G)).
\]
For \(0\le t\le t_n-h\) and \(0\le\theta\le1\), define
\[
 U_{n,h,\theta}(t)
 :=u_n(t)+\theta\bigl(u_n(t+h)-u_n(t)\bigr),
\]
\[
 V_{h,\theta}(t)
 :=R_\lambda(t)+\theta\bigl(R_\lambda(t+h)-R_\lambda(t)\bigr),
\]
and
\begin{align*}
 A_{n,h}&:=\int_0^1 3\abs{U_{n,h,\theta}}^4\dd\theta,&
 B_{n,h}&:=\int_0^1
 2\abs{U_{n,h,\theta}}^2U_{n,h,\theta}^2\dd\theta,\\
 A_{\lambda,h}&:=\int_0^1 3\abs{V_{h,\theta}}^4\dd\theta,&
 B_{\lambda,h}&:=\int_0^1
 2\abs{V_{h,\theta}}^2V_{h,\theta}^2\dd\theta.
\end{align*}
Then
\begin{equation}\label{eq:difference-equation}
 i\partial_tz_{n,h}-H_\alpha z_{n,h}
 +A_{n,h}z_{n,h}+B_{n,h}\overline{z_{n,h}}=f_{n,h},
\end{equation}
where
\begin{align}
 f_{n,h}={}&-(A_{n,h}-A_{\lambda,h})r_{\lambda,h}
 -(B_{n,h}-B_{\lambda,h})\overline{r_{\lambda,h}}
 -\delta_hE_\lambda,\label{eq:difference-force}\\
 \delta_hE_\lambda(t):={}&
 \frac{E_\lambda(t+h)-E_\lambda(t)}h.\notag
\end{align}
Moreover,
\begin{equation}\label{eq:difference-terminal}
 z_{n,h}(t_n-h)=-\frac{v_{\lambda,n}(t_n-h)}h,
\end{equation}
and, for every \(T'<t_n\),
\begin{equation}\label{eq:difference-limit}
 z_{n,h}\longrightarrow\partial_tv_{\lambda,n}
 \quad\text{in }C([0,T'];L^2(\G))
 \quad(h\downarrow0).
\end{equation}
\end{proposition}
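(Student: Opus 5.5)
The plan is to derive everything from the strong regularity of \(u_n\) and \(R_\lambda\) together with a first-order Taylor formula for \(F\) along segments.

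\emph{Regularity of \(z_{n,h}\).} By \cref{prop:approximants} and \cref{prop:framework}, \(u_n\) is a strong solution on an open maximal interval containing \([0,t_n]\). Hence \(u_n\in C([0,t_n];D(H_\alpha))\cap C^1([0,t_n];L^2(\G))\). By \cref{lem:profile-error}, \(R_\lambda\) lies in the same class on \([0,t_n]\subset[0,T)\). So \(v_{\lambda,n}=u_n-R_\lambda\) does too, and therefore \(z_{n,h}\) lies in \(C([0,t_n-h];D(H_\alpha))\cap C^1([0,t_n-h];L^2(\G))\). Since \(D(H_\alpha)\subset H^2(\G)\subset L^\infty\), the functions \(U_{n,h,\theta}\) and \(V_{h,\theta}\) are bounded and continuous in \(t\) uniformly in \(\theta\). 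Consequently \(A_{n,h},B_{n,h},A_{\lambda,h},B_{\lambda,h}\) are bounded multipliers depending continuously on \(t\). This makes every term of \eqref{eq:difference-equation} an element of \(C([0,t_n-h];L^2(\G))\), so the identity can be checked pointwise in \(t\) in \(L^2\).

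\emph{The equation.} Set \(w=(u_n(\cdot+h)-u_n)/h\), so that \(z_{n,h}=w-r_{\lambda,h}\). Regard \(F\) as a real \(C^1\) map with \(\partial_zF=3|z|^4\) and \(\partial_{\bar z}F=2|z|^2z^2\). Then for \(a,b\in\C\),
\[
F(b)-F(a)=\int_0^1\Bigl(3\abs{a+\theta(b-a)}^4(b-a)+2\abs{a+\theta(b-a)}^2(a+\theta(b-a))^2\,\overline{(b-a)}\Bigr)\dd\theta .
\]
Applying this pointwise on each edge with \(a=u_n(t)\), \(b=u_n(t+h)\), and subtracting the equations for \(u_n\) at \(t+h\) and at \(t\), gives
\[
i\partial_tw-H_\alpha w+A_{n,h}w+B_{n,h}\overline w=0 .
\]
Doing the same with \(R_\lambda\), whose equation has right-hand side \(E_\lambda\) by \eqref{eq:error-definition}, gives
\[
i\partial_tr_{\lambda,h}-H_\alpha r_{\lambda,h}+A_{\lambda,h}r_{\lambda,h}+B_{\lambda,h}\overline{r_{\lambda,h}}=\delta_hE_\lambda .
\]
Subtracting these two identities and writing \(A_{n,h}w=A_{n,h}z_{n,h}+A_{n,h}r_{\lambda,h}\), and similarly for \(B_{n,h}\), produces exactly \eqref{eq:difference-equation} with forcing \eqref{eq:difference-force}.

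\emph{Terminal value.} Since \(u_n(t_n)=R_\lambda(t_n)\), we have \(v_{\lambda,n}(t_n)=0\). Evaluating the definition of \(z_{n,h}\) at \(t=t_n-h\) then gives \eqref{eq:difference-terminal}.

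\emph{Limit as \(h\downarrow0\).} Fix \(T'<t_n\) and take \(h<t_n-T'\). Because \(v_{\lambda,n}\in C^1([0,t_n];L^2)\),
\[
z_{n,h}(t)-\partial_tv_{\lambda,n}(t)=\frac1h\int_0^h\bigl(\partial_tv_{\lambda,n}(t+s)-\partial_tv_{\lambda,n}(t)\bigr)\dd s .
\]
Uniform continuity of \(\partial_tv_{\lambda,n}\) on the compact interval \([0,t_n]\) then yields \eqref{eq:difference-limit}.

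The only step I expect to need care is the first one: confirming that the strong regularity of \(u_n\) really extends up to the closed endpoint \(t_n\), which holds because \(t_n\) is interior to the maximal interval, and that the multipliers are regular enough in \(t\) for \eqref{eq:difference-equation} to hold in \(C([0,t_n-h];L^2)\) rather than merely a.e. Both follow from the \(L^\infty\) embedding \eqref{eq:sobolev-graph} and the continuity of the \(\theta\)-integrals.
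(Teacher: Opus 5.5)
Your proposal is correct and follows essentially the same route as the paper. The paper also applies the fundamental theorem of calculus along the segment with $DF(z)\eta=3\abs{z}^4\eta+2\abs{z}^2z^2\overline{\eta}$, subtracts the analogous identity for $R_\lambda$ using $\delta_hu_n=z_{n,h}+r_{\lambda,h}$, reads off the terminal value from $v_{\lambda,n}(t_n)=0$, and concludes by uniform continuity of $\partial_tv_{\lambda,n}$; your explicit justification of the regularity of $z_{n,h}$ and of the multipliers fills in a step the paper leaves implicit.
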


\begin{proof}
The remainder satisfies
\begin{equation}\label{eq:remainder-strong}
 i\partial_tv_{\lambda,n}-H_\alpha v_{\lambda,n}
 +F(R_\lambda+v_{\lambda,n})-F(R_\lambda)+E_\lambda=0.
\end{equation}
For \(F(z)=\abs z^4z\), its real differential is
\begin{equation}\label{eq:F-differential}
 DF(z)\eta=3\abs z^4\eta+2\abs z^2z^2\overline\eta.
\end{equation}
The fundamental theorem of calculus along the segment between
\(u_n(t)\) and \(u_n(t+h)\) gives
\[
 \delta_hF(u_n)
 =A_{n,h}\delta_hu_n+B_{n,h}\overline{\delta_hu_n}.
\]
The same identity for \(R_\lambda\), together with
\(\delta_hu_n=z_{n,h}+r_{\lambda,h}\), gives
\eqref{eq:difference-equation}--\eqref{eq:difference-force}.
Since \(v_{\lambda,n}(t_n)=0\), \eqref{eq:difference-terminal} follows.

Finally, \(v_{\lambda,n}\in C^1([0,t_n];L^2)\).  If
\(T'+h_0<t_n\), then
\[
 z_{n,h}(t)-\partial_tv_{\lambda,n}(t)
 =\frac1h\int_0^h
 \bigl(\partial_tv_{\lambda,n}(t+s)
 -\partial_tv_{\lambda,n}(t)\bigr)\dd s.
\]
Uniform continuity of \(\partial_tv_{\lambda,n}\) on
\([0,T'+h_0]\) proves \eqref{eq:difference-limit}.
\end{proof}

\begin{proposition}[Uniform time-derivative bound]\label[proposition]{prop:time-derivative}
For every \(T'<T\), there are \(n_0\) and \(C_{T'}>0\) such that
\begin{equation}\label{eq:time-derivative-bound}
 \sup_{n\ge n_0}\sup_{0\le t\le T'}
 \norm{\partial_tv_{\lambda,n}(t)}_{L^2(\G)}
 \le C_{T'}.
\end{equation}
\end{proposition}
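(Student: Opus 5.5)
We will estimate the difference quotient $z_{n,h}$ from \cref{prop:difference-equation} in $L^2(\G)$, uniformly in $n$ and $h$, and then pass to the limit $h\downarrow0$ using \eqref{eq:difference-limit}.

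\emph{Energy identity.} Since $z_{n,h}\in C([0,t_n-h];D(H_\alpha))\cap C^1([0,t_n-h];L^2(\G))$, we may take the $L^2$ inner product of \eqref{eq:difference-equation} with $z_{n,h}$ and keep the imaginary part. Self-adjointness of $H_\alpha$ and the reality of $A_{n,h}$ remove those two terms. We are left with
\[
 \Bigl|\frac{d}{dt}\norm{z_{n,h}(t)}_2\Bigr|
 \le \norm{B_{n,h}(t)}_\infty\norm{z_{n,h}(t)}_2+\norm{f_{n,h}(t)}_2 .
\]
The bounds \eqref{eq:profile-Linfty}, \eqref{eq:approximant-v-bounds} and \eqref{eq:sobolev-graph} give $\norm{u_n(s)}_\infty\le C\mu(s)^{-1/2}$ uniformly in $n$. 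Hence $\norm{B_{n,h}(t)}_\infty\le C\mu(t+h)^{-2}\le C\mu(t_n)^{-2}$. This weight is not integrable up to $T$ uniformly in $n$, so we integrate backward from $t_n-h$ using $\mu(s+h)^{-2}\le$ a function of $s$ alone, for instance $\mu(s+h)^{-2}\le \mu(\min(s+h,t_n))^{-2}$. Grönwall's inequality then gives
\[
 \norm{z_{n,h}(t)}_2\le\Bigl(\norm{z_{n,h}(t_n-h)}_2+\int_t^{t_n-h}\norm{f_{n,h}(s)}_2\dd s\Bigr)
 \exp\Bigl(C\!\int_t^{t_n}\mu(s)^{-2}\dd s\Bigr).
\]
The exponential factor is at most $\exp\bigl(C/(\lambda\mu(t_n))\bigr)$, as in \cref{prop:approximant-convergence}. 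It blows up like $e^{C/(\lambda\mu_n)}$, so it must be beaten by exponentially small data and forcing.

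\emph{Forcing terms.} We treat each piece of \eqref{eq:difference-force}.
\begin{itemize}
\item The term $\delta_hE_\lambda$ is bounded by $\sup_{[s,s+h]}\norm{\partial_tE_\lambda}_2\le C_\lambda\mu^{-M}e^{-\delta/\mu}$, using \eqref{eq:error-time}.
\item For $(A_{n,h}-A_{\lambda,h})r_{\lambda,h}$, we use $|A_{n,h}-A_{\lambda,h}|\le C\mu^{-3/2}(|v_{\lambda,n}(t)|+|v_{\lambda,n}(t+h)|)$. The $v$ factor is controlled in $L^\infty$ by $\rho_\lambda e^{-\delta/(3\mu)}$, and the $r_{\lambda,h}$ factor in $L^2$ by $\sup\norm{\partial_tR_\lambda}_2\le C\mu^{-M_0}$ from \eqref{eq:profile-time}.
\item The $B$ term is handled the same way.
\end{itemize}
Altogether $\norm{f_{n,h}(s)}_2\le C_\lambda\mu(s+h)^{-K}e^{-\delta/(3\mu(s+h))}$.

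\emph{Terminal datum (main obstacle).} By \eqref{eq:difference-terminal}, $z_{n,h}(t_n-h)=-v_{\lambda,n}(t_n-h)/h$. A crude bound of order $1/h$ is useless, so the plan is to show that $\norm{v_{\lambda,n}(t_n-h)}_2\le Ch\,\mu_n^{-K}e^{-\delta/(3\mu_n)}$. This follows from the Duhamel formula for the fixed point of $\Phi_{t_n}$: since $v_{\lambda,n}(t_n)=0$, the quantity $v_{\lambda,n}(t_n-h)$ is an integral over an interval of length $h$. The integrand $F(R+v)-F(R)+E_\lambda$ has $L^2$ norm at most $C\mu^{-2}\rho_\lambda e^{-\delta/(2\mu)}+Ce^{-\delta/\mu}$ by \eqref{eq:nonlinear-L2} and \eqref{eq:error-H1}. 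Hence $\norm{z_{n,h}(t_n-h)}_2\le C e^{-\delta/(3\mu_n)}\mu_n^{-2}$, uniformly in $h$.

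\emph{Conclusion.} Combining the three steps, for $t\le T'$ and $n$ large,
\[
 \norm{z_{n,h}(t)}_2\le C_\lambda\,\mu_n^{-K}e^{-(\delta/3-C/\lambda)/\mu_n}+C_{T'} .
\]
After enlarging $\lambda_0$ so that $\delta/3>C/\lambda$, the first term is bounded uniformly in $n$; the $C_{T'}$ contribution comes from the forcing integrated near $t$. Letting $h\downarrow0$ via \eqref{eq:difference-limit} yields \eqref{eq:time-derivative-bound}.

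The delicate point is the exponent bookkeeping. The Grönwall growth $e^{C/(\lambda\mu_n)}$ has to be dominated both by the exponential smallness of the forcing near $t_n$ and by the smallness of the terminal difference quotient. This should work because the constant $C$ in the Grönwall exponent carries a factor $1/\lambda$, while the decay rate $\delta/3$ does not.
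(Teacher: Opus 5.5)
Your route is essentially the paper's: difference quotients $z_{n,h}$, a backward $L^2$ estimate in which the growth from $B_{n,h}$ (of size $C\mu(t+h)^{-2}$, with $C$ independent of $\lambda$) is paid for by taking $\lambda$ large, the terminal value controlled through Duhamel on $[t_n-h,t_n]$, and $h\downarrow0$ at fixed $n$. The paper's weight $\omega_h=e^{2\gamma/\mu(t+h)}$ with $\gamma\lambda\ge2C$ plays the role of your Gr\"onwall integrating factor. Two steps, however, fail as written.

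\emph{The Gr\"onwall step.} Your displayed inequality multiplies the whole forcing integral by $\exp\bigl(C\int_t^{t_n}\mu^{-2}\bigr)\approx e^{C/(\lambda\mu_n)}$. Nothing makes the forcing on $[t,T']$ small in $n$: your own bound for it there is a fixed constant. So this product is unbounded as $n\to\infty$, and your ``$+C_{T'}$'' does not follow from the inequality you wrote. You need the Duhamel form
\[
 \norm{z_{n,h}(t)}_2\le\norm{z_{n,h}(t_n-h)}_2\,e^{\int_t^{t_n-h}b}
 +\int_t^{t_n-h}e^{\int_t^{s}b}\,\norm{f_{n,h}(s)}_2\dd s,
 \qquad b(\sigma)=C\mu(\sigma+h)^{-2}.
\]
Here the forcing at time $s$ carries only $e^{\int_t^sb}\le e^{C/(\lambda\mu(s+h))}$. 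This is exactly what the factor $\omega_h(s)$ inside the integral in \eqref{eq:weighted-energy-integrated} encodes.

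\emph{The missing restriction on $h$.} Without it, the bounds you assert uniformly in $h$ are false. Since $\mu(s)\ge\mu(s+h)$ and $v_{\lambda,n}(s)$ is only $O(e^{-\delta/(3\mu(s))})$, the forcing bound is $C\mu(s+h)^{-K}e^{-\delta/(3\mu(s))}$, not $e^{-\delta/(3\mu(s+h))}$. Likewise, the Duhamel integrand on $[t_n-h,t_n]$ is only $O\bigl(\mu(t_n-h)^{-2}e^{-\delta/(2\mu(t_n-h))}\bigr)$. This exceeds $e^{-\delta/(3\mu_n)}$ by an arbitrarily large factor once $\lambda h\gtrsim\mu_n$ (for instance, $\lambda h=2\mu_n$ costs $e^{\delta/(6\mu_n)}$).

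Your growth factors are expressed through $\mu(s+h)$ and $\mu_n$, so you must make $\mu(s)$ and $\mu(s+h)$ comparable. This is what \eqref{eq:h-range} does: for $h\le(T-t_n)/2$ one has $\lambda h\le\mu_n/2$, hence $\mu(s)\le\frac32\mu(s+h)$. It follows that $e^{-\delta/(3\mu(s))}\le e^{-2\delta/(9\mu(s+h))}$ and $\norm{z_{n,h}(t_n-h)}_2\le C\mu_n^{-2}e^{-\delta/(3\mu_n)}$. The restriction costs nothing, since $h\downarrow0$ at fixed $n$, but it must be imposed.

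With both corrections, take $\lambda$ so large that $C/\lambda<2\delta/9$. The terminal contribution is then at most $C\mu_n^{-2}e^{-(\delta/3-C/\lambda)/\mu_n}$. The integrand is at most $C\mu(s+h)^{-K}e^{-(2\delta/9-C/\lambda)/\mu(s+h)}$, which is bounded and integrable uniformly in $n$ and $h$. Your argument then closes.
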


\begin{proof}
Choose \(h_0>0\) with \(T'+h_0<T\) and then \(n_0\) so that
\(T'+h_0<t_n\) for \(n\ge n_0\).  For a fixed such \(n\), restrict
\begin{equation}\label{eq:h-range}
 0<h\le\overline h_n:=
 \min\left\{h_0,\frac{T-t_n}{2}\right\}.
\end{equation}
All estimates below are uniform in \(n\ge n_0\) and in this range of \(h\).

\smallskip
Let
\[
 a_0(z)=3\abs z^4,\qquad b_0(z)=2\abs z^2z^2.
\]
The mean-value theorem on \(\R^2\) gives
\begin{equation}\label{eq:ab-lipschitz}
 \abs{a_0(z)-a_0(w)}+\abs{b_0(z)-b_0(w)}
 \le C(\abs z^3+\abs w^3)\abs{z-w}.
\end{equation}
By \eqref{eq:profile-Linfty} and
\eqref{eq:approximant-v-bounds},
\begin{equation}\label{eq:un-Linfty}
 \norm{u_n(t)}_\infty\le C\mu(t)^{-1/2},
 \qquad 0\le t\le t_n.
\end{equation}
As \(\mu\) is decreasing,
\[
 \norm{U_{n,h,\theta}(t)}_\infty
 +\norm{V_{h,\theta}(t)}_\infty
 \le C\mu(t+h)^{-1/2}.
\]
Moreover,
\[
 \norm{U_{n,h,\theta}(t)-V_{h,\theta}(t)}_\infty
 \le Ce^{-\delta/(3\mu(t))}.
\]
Thus \eqref{eq:ab-lipschitz} yields
\begin{align}
 &\norm{A_{n,h}(t)-A_{\lambda,h}(t)}_\infty
 +\norm{B_{n,h}(t)-B_{\lambda,h}(t)}_\infty\notag\\
 &\hspace{35mm}\le
 C\mu(t+h)^{-3/2}e^{-\delta/(3\mu(t))},
 \label{eq:coefficient-difference}\\
 &\norm{B_{n,h}(t)}_\infty\le C\mu(t+h)^{-2}.
 \label{eq:B-bound}
\end{align}
The constant in \eqref{eq:B-bound} is uniform for \(\lambda\) in the
perturbative range.

The fundamental theorem of calculus and
\eqref{eq:profile-time} imply
\begin{equation}\label{eq:r-bound}
 \norm{r_{\lambda,h}(t)}_2
 \le\sup_{s\in[t,t+h]}\norm{\partial_tR_\lambda(s)}_2
 \le C_\lambda\mu(t+h)^{-M_0}.
\end{equation}
From \eqref{eq:h-range}, for \(s\in[t,t+h]\),
\[
 \mu(t+h)\le\mu(s)\le\mu(t)\le\frac32\mu(t+h).
\]
Using \eqref{eq:error-time}, we therefore obtain
\begin{equation}\label{eq:delta-E-bound}
 \norm{\delta_hE_\lambda(t)}_2
 \le C_\lambda\mu(t+h)^{-M}
 e^{-\delta/(2\mu(t+h))}.
\end{equation}
Combining \eqref{eq:difference-force},
\eqref{eq:coefficient-difference}, \eqref{eq:r-bound} and
\eqref{eq:delta-E-bound}, we find an integer \(M_1\ge0\) such that
\begin{equation}\label{eq:force-bound}
 \norm{f_{n,h}(t)}_2
 \le C_\lambda\mu(t+h)^{-M_1}
 e^{-\delta/(3\mu(t))}.
\end{equation}

\smallskip
Put
\[
 \gamma:=\frac{\delta}{12},\qquad
 \omega_h(t):=e^{2\gamma/\mu(t+h)}.
\]
Since \(\mu'(t)=-\lambda\),
\begin{equation}\label{eq:weight-derivative}
 \frac{\omega_h'(t)}{\omega_h(t)}
 =\frac{2\gamma\lambda}{\mu(t+h)^2}.
\end{equation}
Take the \(L^2(\G)\) pairing of
\eqref{eq:difference-equation} with
\(\omega_h\overline{z_{n,h}}\) and then its imaginary part.  Self-adjointness
of \(H_\alpha\) removes its contribution and \(A_{n,h}\) is real.
Consequently,
\begin{align}
 \frac12\frac{\dd}{\dd t}
 \bigl(\omega_h\norm{z_{n,h}}_2^2\bigr)
 ={}&\frac12\omega_h'\norm{z_{n,h}}_2^2
 -\omega_h\operatorname{Im}
 \int_{\G}B_{n,h}\overline{z_{n,h}}^{\,2}\dd x\notag\\
 &+\omega_h\operatorname{Im}
 \int_{\G}f_{n,h}\overline{z_{n,h}}\dd x.
 \label{eq:weighted-energy-identity}
\end{align}
By \eqref{eq:B-bound},
\[
 \left|\operatorname{Im}\int_{\G}
 B_{n,h}\overline{z_{n,h}}^{\,2}\dd x\right|
 \le C\mu(t+h)^{-2}\norm{z_{n,h}}_2^2.
\]
Choose \(\lambda\) still larger if necessary so that
\(\gamma\lambda\ge2C\).  Using Young's inequality for the forcing term in
\eqref{eq:weighted-energy-identity}, we obtain
\[
 -\frac{\dd}{\dd t}
 \bigl(\omega_h\norm{z_{n,h}}_2^2\bigr)
 \le C\mu(t+h)^2\omega_h\norm{f_{n,h}}_2^2.
\]
Integration from \(t\) to \(t_n-h\) gives
\begin{align}
 \omega_h(t)\norm{z_{n,h}(t)}_2^2
 \le{}&\omega_h(t_n-h)\norm{z_{n,h}(t_n-h)}_2^2\notag\\
 &+C\int_t^{t_n-h}\mu(s+h)^2\omega_h(s)
 \norm{f_{n,h}(s)}_2^2\dd s.
 \label{eq:weighted-energy-integrated}
\end{align}

\smallskip
The final-time fixed-point identity is
\[
 v_{\lambda,n}(t)
 =-i\int_t^{t_n}S_\alpha(t-s)G_n(s)\dd s,
\]
where
\[
 G_n:=F(R_\lambda+v_{\lambda,n})-F(R_\lambda)+E_\lambda.
\]
The same nonlinear estimates as in
\cref{lem:terminal-perturbation} give, for some \(M_2\ge0\),
\begin{equation}\label{eq:Gn-bound}
 \norm{G_n(t)}_2\le
 C_\lambda\mu(t)^{-M_2}e^{-\delta/(3\mu(t))}.
\end{equation}
Put \(\mu_n=\mu(t_n)\).  On \([t_n-h,t_n]\),
\[
 \mu_n\le\mu(s)\le\mu_n+\lambda h\le\frac32\mu_n.
\]
Using \eqref{eq:difference-terminal}, unitarity and
\eqref{eq:Gn-bound},
\begin{align}
 \norm{z_{n,h}(t_n-h)}_2
 &\le\frac1h\int_{t_n-h}^{t_n}\norm{G_n(s)}_2\dd s\notag\\
 &\le C_\lambda\mu_n^{-M_2}
 e^{-2\delta/(9\mu_n)}.
 \label{eq:terminal-z-bound}
\end{align}
Since \(\omega_h(t_n-h)=e^{2\gamma/\mu_n}\) and
\(\gamma=\delta/12\),
\begin{equation}\label{eq:terminal-weighted-bound}
 \omega_h(t_n-h)\norm{z_{n,h}(t_n-h)}_2^2
 \le C_\lambda\mu_n^{-2M_2}e^{-5\delta/(18\mu_n)}
 \le C_\lambda.
\end{equation}

\smallskip
For \(0\le s\le t_n-h\), \eqref{eq:h-range} implies
\[
 \mu(s+h)\le\mu(s)\le\frac32\mu(s+h).
\]
Therefore \eqref{eq:force-bound} and the choice
\(\gamma=\delta/12\) give, for some \(M_3\ge0\),
\begin{align}
 \mu(s+h)^2\omega_h(s)\norm{f_{n,h}(s)}_2^2
 &\le C_\lambda\mu(s+h)^{2-2M_1}
 \exp\!\left(\frac{2\gamma}{\mu(s+h)}
 -\frac{2\delta}{3\mu(s)}\right)\notag\\
 &\le C_\lambda\mu(s)^{-M_3}
 e^{-5\delta/(12\mu(s))}.
 \label{eq:weighted-force-integrand}
\end{align}
This is integrable uniformly up to \(T\), since
\[
 \int_0^T\mu(s)^{-M_3}e^{-5\delta/(12\mu(s))}\dd s
 =\frac1\lambda\int_0^{\kappa/\lambda}
 r^{-M_3}e^{-5\delta/(12r)}\dd r<\infty.
\]
Combining this with \eqref{eq:weighted-energy-integrated} and
\eqref{eq:terminal-weighted-bound} gives
\[
 \omega_h(t)\norm{z_{n,h}(t)}_2^2\le C_{T'},
 \qquad 0\le t\le T'.
\]
Since \(\omega_h\ge1\), these quotients are uniformly bounded on
\([0,T']\) with values in \(L^2\).  For each fixed \(n\), we may pass to
the limit \(h\downarrow0\) using \eqref{eq:difference-limit}.  This proves
\eqref{eq:time-derivative-bound}.
\end{proof}

\begin{corollary}[Uniform graph-norm bounds]\label[corollary]{cor:graph-bounds}
For every \(T'<T\), there are \(n_0\) and \(C_{T'}\) such that
\begin{equation}\label{eq:H-v-bound}
 \sup_{n\ge n_0}\sup_{0\le t\le T'}
 \norm{H_\alpha v_{\lambda,n}(t)}_2\le C_{T'},
\end{equation}
and
\begin{equation}\label{eq:H-u-bound}
 \sup_{n\ge n_0}\sup_{0\le t\le T'}
 \norm{H_\alpha u_n(t)}_2\le C_{T'}.
\end{equation}
\end{corollary}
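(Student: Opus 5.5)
We read the bound on \(H_\alpha v_{\lambda,n}\) directly off the remainder equation \eqref{eq:remainder-strong}, which holds pointwise in time in \(L^2(\G)\) because \(u_n\) is a strong solution (\cref{prop:approximants}) and \(R_\lambda\in C([0,T);D(H_\alpha))\cap C^1([0,T);L^2(\G))\) (\cref{lem:profile-error}). Thus \(v_{\lambda,n}=u_n-R_\lambda\in C([0,t_n];D(H_\alpha))\cap C^1([0,t_n];L^2(\G))\), and rearranging \eqref{eq:remainder-strong} gives
\[
 H_\alpha v_{\lambda,n}
 =i\partial_tv_{\lambda,n}+F(R_\lambda+v_{\lambda,n})-F(R_\lambda)+E_\lambda .
\]
Taking \(L^2\) norms yields three terms to control uniformly in \(n\ge n_0\) and \(t\in[0,T']\).

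First, \(\norm{\partial_tv_{\lambda,n}(t)}_2\le C_{T'}\) by \cref{prop:time-derivative}, with \(n_0\) chosen there. Second, by \eqref{eq:F-Lipschitz}, \eqref{eq:profile-Linfty}, \eqref{eq:approximant-v-bounds} and \eqref{eq:sobolev-graph},
\[
 \norm{F(R_\lambda+v_{\lambda,n})-F(R_\lambda)}_2
 \le C\bigl(\norm{R_\lambda}_\infty^4+\norm{v_{\lambda,n}}_\infty^4\bigr)\norm{v_{\lambda,n}}_2
 \le C\mu(t)^{-2}\rho_\lambda .
\]
On \([0,T']\) we have \(\mu(t)\ge\mu(T')=\lambda(T-T')>0\), so this term is bounded by a constant depending only on \(T'\) (and the fixed \(\lambda\)). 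Third, \(\norm{E_\lambda(t)}_2\le C e^{-\delta/\mu(t)}\le C\) by \eqref{eq:error-H1}. Summing the three bounds gives \eqref{eq:H-v-bound}.

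For \eqref{eq:H-u-bound}, we write \(H_\alpha u_n=H_\alpha R_\lambda+H_\alpha v_{\lambda,n}\). Since \(R_\lambda\) vanishes near the vertex and lives only on edge \(1\), we have \(H_\alpha R_\lambda=(-\partial_x^2(\chi\widetilde R_\lambda),0)\). By \eqref{eq:profile-derivatives} with \(k\le2\) and the compact support of \(\chi,\chi',\chi''\), its \(L^2\) norm is at most \(C\mu(t)^{-2}\), which is again bounded on \([0,T']\) by \(C\mu(T')^{-2}\). Adding this to \eqref{eq:H-v-bound} gives \eqref{eq:H-u-bound}.

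The only genuinely nontrivial input is the uniform time-derivative bound, which is already supplied by \cref{prop:time-derivative}. What remains is bookkeeping: we need the strong-solution regularity so that \eqref{eq:remainder-strong} is an identity in \(L^2\) rather than only in a dual space, and we need all other constants to be independent of \(n\). The latter holds because every bound used (\eqref{eq:approximant-v-bounds}, \eqref{eq:profile-Linfty}, \eqref{eq:error-H1}) is uniform in \(n\).
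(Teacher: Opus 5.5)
Your proposal is correct and follows the paper's proof: you rearrange \eqref{eq:remainder-strong}, bound $i\partial_t v_{\lambda,n}$ by \cref{prop:time-derivative}, the nonlinear difference by \eqref{eq:F-Lipschitz} together with the uniform $L^\infty$ bounds, and $E_\lambda$ by \cref{lem:profile-error}. You then add $H_\alpha R_\lambda$, which is bounded on $[0,T']$, to obtain \eqref{eq:H-u-bound}.
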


\begin{proof}
Equation \eqref{eq:remainder-strong} gives
\[
 H_\alpha v_{\lambda,n}
 =i\partial_tv_{\lambda,n}
 +F(R_\lambda+v_{\lambda,n})-F(R_\lambda)+E_\lambda.
\]
The first term is controlled by \cref{prop:time-derivative}.  On
\([0,T']\), the second is uniformly bounded in \(L^2\) by
\[
 \norm{F(R_\lambda+v_{\lambda,n})-F(R_\lambda)}_2
 \le C\bigl(\norm{R_\lambda}_\infty^4
 +\norm{v_{\lambda,n}}_\infty^4\bigr)\norm{v_{\lambda,n}}_2,
\]
and the error is bounded by \cref{lem:profile-error}.  This proves
\eqref{eq:H-v-bound}.  Since \(H_\alpha R_\lambda\) is bounded on compact
time intervals, \eqref{eq:H-u-bound} follows.  Together with the already
known \(L^2\)-bounds, these are uniform graph-norm bounds.
\end{proof}

\begin{proposition}[Domain persistence]\label[proposition]{prop:domain-persistence}
For every \(t<T\), \(u_\lambda(t)\in D(H_\alpha)\).
\end{proposition}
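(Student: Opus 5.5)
Fix \(t<T\) and choose \(T'\) with \(t<T'<T\). The plan is a weak compactness argument combined with closedness of \(H_\alpha\). By \cref{prop:approximants}, each \(u_n(t)\) lies in \(D(H_\alpha)\) for \(n\) large. By \cref{cor:graph-bounds}, the sequence \(u_n(t)\) is bounded in the graph norm \(\norm{\cdot}_{D(H_\alpha)}\), uniformly in \(n\ge n_0\): its \(L^2\)-norms are bounded by mass conservation, or simply by \eqref{eq:profile-Linfty} together with \eqref{eq:approximant-v-bounds}. Meanwhile \cref{prop:approximant-convergence} gives \(u_n(t)\to u_\lambda(t)\) strongly in \(L^2(\G)\).

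Since \(H_\alpha u_n(t)\) is bounded in \(L^2(\G)\), a Hilbert space, we can extract a subsequence along which \(H_\alpha u_{n_k}(t)\rightharpoonup g\) weakly in \(L^2(\G)\). The graph of \(H_\alpha\) is a closed subspace of \(L^2\times L^2\), because \(H_\alpha\) is self-adjoint and therefore closed. A closed convex subset is weakly closed, so the graph is also weakly closed. The pairs \((u_{n_k}(t),H_\alpha u_{n_k}(t))\) lie in the graph and converge weakly to \((u_\lambda(t),g)\), since strong convergence of the first component implies weak convergence. It follows that \(u_\lambda(t)\in D(H_\alpha)\) and \(H_\alpha u_\lambda(t)=g\), with \(\norm{H_\alpha u_\lambda(t)}_2\le C_{T'}\).

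There is a more hands-on alternative that avoids abstract closedness. For any \(\phi\in D(H_\alpha)\), self-adjointness gives
\[
\langle u_\lambda(t),H_\alpha\phi\rangle=\lim_k\langle u_{n_k}(t),H_\alpha\phi\rangle=\lim_k\langle H_\alpha u_{n_k}(t),\phi\rangle=\langle g,\phi\rangle.
\]
Hence \(u_\lambda(t)\in D(H_\alpha^*)=D(H_\alpha)\). This version makes clear that only the uniform bound from \cref{cor:graph-bounds} and the \(L^2\) convergence are needed.

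The real obstacle is therefore not in this step, which is soft. It lies in the uniformity of the graph bounds in \(n\), i.e.\ \cref{prop:time-derivative}, which is already established. The only point to check is that \(n_0\) and \(C_{T'}\) do not depend on \(n\), and that for \(n\ge n_0\) the interval \([0,T']\) is contained in \([0,t_n]\), where \(u_n\) is strong. Both hold by construction.

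It is also natural to note, for later use toward the strong-solution claim, that the bound \(\sup_{0\le t\le T'}\norm{H_\alpha u_\lambda(t)}_2<\infty\) is uniform on compact subintervals. Continuity in \(D(H_\alpha)\) and the \(C^1\) property in \(L^2\) should then follow from the strong Cauchy theory in \cref{prop:framework}: \(u_\lambda(0)\in D(H_\alpha)\), and uniqueness identifies \(u_\lambda\) with the strong solution starting from it.
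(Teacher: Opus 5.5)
Your argument is correct and is essentially the paper's proof: strong $L^2$ convergence $u_n(t)\to u_\lambda(t)$ from \cref{prop:approximant-convergence}, the uniform bound on $H_\alpha u_n(t)$ from \cref{cor:graph-bounds}, extraction of a weakly convergent subsequence, and weak closedness of the graph of the closed operator $H_\alpha$. Your adjoint-based variant is the same closedness step rewritten via $D(H_\alpha^*)=D(H_\alpha)$.
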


\begin{proof}
Fix \(T'<T\) and \(t\in[0,T']\).  By
\eqref{eq:approximant-convergence}, \(u_n(t)\to u_\lambda(t)\) strongly in
\(L^2\), while \eqref{eq:H-u-bound} permits extraction of a subsequence
such that
\[
 H_\alpha u_n(t)\rightharpoonup g_t
 \quad\text{weakly in }L^2(\G).
\]
The graph of the closed linear operator \(H_\alpha\) is a closed linear
subspace of \(L^2(\G)\times L^2(\G)\) and is therefore weakly closed.
Since
\[
 (u_n(t),H_\alpha u_n(t))\in\operatorname{Graph}(H_\alpha),
\]
we infer
\[
 (u_\lambda(t),g_t)\in\operatorname{Graph}(H_\alpha).
\]
Thus \(u_\lambda(t)\in D(H_\alpha)\).  Since \(T'<T\) and \(t\in[0,T']\) are arbitrary,
the result is proved.
\end{proof}

\begin{proposition}[Strong regularity]\label[proposition]{prop:strong-regularity}
The solution constructed in \cref{prop:energy-construction} satisfies
\[
 u_\lambda\in C([0,T);D(H_\alpha))
 \cap C^1([0,T);L^2(\G)).
\]
\end{proposition}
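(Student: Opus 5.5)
We only need to upgrade pointwise membership $u_\lambda(t)\in D(H_\alpha)$ (\cref{prop:domain-persistence}) to continuity in the graph norm and $C^1$ in $L^2$. The quickest route is to invoke \cref{prop:framework}: since $u_\lambda$ is the restriction of the maximal energy solution (\cref{prop:energy-construction}) and $u_\lambda(t_0)\in D(H_\alpha)$ for any $t_0<T$, the strong Cauchy theory gives a strong solution through $t_0$. Energy-class uniqueness identifies it with $u_\lambda$ on the common interval, and the blow-up alternative (the energy norm is bounded on $[0,T']$ by \eqref{eq:v-bounds}) shows that the strong solution exists on all of $[0,T']$ for every $T'<T$. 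This yields $u_\lambda\in C([0,T');D(H_\alpha))\cap C^1([0,T');L^2(\G))$ for every $T'<T$, hence on $[0,T)$. I would take $t_0=0$, apply persistence forward, and the argument is complete.

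To avoid leaning entirely on the black-box statement of \cref{prop:framework}, I would also sketch a direct check. From the uniform bounds of \cref{prop:time-derivative} and \cref{cor:graph-bounds}, together with \eqref{eq:approximant-convergence}, the limit $u_\lambda$ is Lipschitz in $L^2$ on $[0,T']$. Its weak derivative is the weak limit of $\partial_tu_n$. Moreover, $H_\alpha u_\lambda(t)$ is bounded in $L^2$, and $H_\alpha u_\lambda=i\partial_tu_\lambda+F(u_\lambda)$ holds in the distributional sense.

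For continuity, $F(u_\lambda)\in C([0,T'];L^2)$ follows from $u_\lambda\in C(\E(\G))$ and \eqref{eq:F-Lipschitz}. The Duhamel formula with a forcing in $W^{1,1}(L^2)$ then gives $C^1$ in $L^2$. Indeed, $F(u_\lambda)$ is Lipschitz in $L^2$ because $u_\lambda$ is Lipschitz in $L^2$ and bounded in $L^\infty$. The standard regularity result for inhomogeneous equations with a $W^{1,1}$ forcing and initial datum in $D(H_\alpha)$ (Kato/Pazy) then yields the classical solution class.

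The main obstacle I expect is the continuity of $t\mapsto H_\alpha u_\lambda(t)$ in $L^2$, as opposed to mere boundedness. For this I would rely on the Pazy theorem: with $g=F(u_\lambda)\in W^{1,1}(0,T';L^2)$, the mild solution is strong and $C^1$, and therefore $H_\alpha u=i\partial_tu+g$ is continuous.
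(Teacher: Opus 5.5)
Your proposal is correct, and its main argument is essentially the paper's. The paper applies strong local well-posedness at every $t_0<T$, using \cref{prop:domain-persistence} at each time, and glues the local strong solutions to $u_\lambda$ by energy-class uniqueness; you instead take $t_0=0$ and invoke the clause of \cref{prop:framework} that strong regularity persists on the whole maximal interval, which is the same black box used once. Your secondary check is also valid and independent of the nonlinear strong theory: \cref{prop:time-derivative,prop:approximant-convergence} make $u_\lambda$, and hence (by the $L^\infty$ bound) $F(u_\lambda)$, Lipschitz in $L^2$ on $[0,T']$, so the linear regularity result for forcings in $W^{1,1}(0,T';L^2(\G))$ with datum $u_\lambda(0)\in D(H_\alpha)$ gives the claimed class directly.
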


\begin{proof}
Fix \(t_0<T\).  By \cref{prop:domain-persistence},
\(u_\lambda(t_0)\in D(H_\alpha)\).  Strong local well-posedness produces a
solution
\[
 U\in C(J;D(H_\alpha))\cap C^1(J;L^2(\G))
\]
on a neighborhood \(J\) of \(t_0\), with
\(U(t_0)=u_\lambda(t_0)\).  Both \(U\) and \(u_\lambda\) are energy
solutions, so energy-class uniqueness shows that they agree on
\(J\cap[0,T)\).  Such neighborhoods cover \([0,T)\), proving the claim.
\end{proof}

\begin{proof}[Proof of \cref{thm:main}]
\Cref{prop:energy-construction} gives the energy solution and the correction
bounds.  \Cref{prop:localization} gives the precise blow-up rate and the
edgewise decay, while \cref{prop:mass-concentration} gives the exact mass
and concentration.  Finally, \cref{prop:strong-regularity} gives the stated
operator-domain regularity.
\end{proof}

\section{Terminal parameters and open problems}\label{sec:terminal-family}

The qualitative properties stated in \cref{thm:main} do not determine a unique 
finite-time blow-up solution. 
Indeed, this construction admits two degrees of freedom arising from the group of symmetries
acting on the pseudoconformal profile in \eqref{eq:euclidean-family}: 
the concentration scale \(\ell\) and the position of the renormalized center \(p\).

\begin{proposition}[Two-parameter terminal family]\label[proposition]{prop:terminal-family}
Fix \(\alpha\in\R\), \(a>0\), a blow-up time \(T\in\R\) and take edge \(1\) to be the blow-up edge.  
For every \(p\in\R\), there are
\(\ell_0(p)>0\) and \(\kappa>0\) such that, for every
\(\ell\ge\ell_0(p)\), equation \eqref{eq:nls} has a strong solution
\[
 U_{\ell,p}\in
 C([T-\kappa/\ell^2,T);D(H_\alpha))
 \cap C^1([T-\kappa/\ell^2,T);L^2(\G)).
\]
With \(\mu_\ell(t)=\ell(T-t)\) and the cut-off from
\eqref{eq:cutoff}, it has the decomposition
\begin{equation}\label{eq:terminal-decomposition}
 U_{\ell,p}=R_{\ell,p}+v_{\ell,p},\qquad
 R_{\ell,p}(t)=\bigl(\chi S_{a,T,\ell,p,0}(t),0\bigr),
\end{equation}
and, for constants \(C_p,\delta_p>0\),
\begin{equation}\label{eq:terminal-correction}
 \norm{v_{\ell,p}(t)}_{H^1(\G)}
 \le\frac{C_p}{\ell}
 e^{-\delta_p/(3\mu_\ell(t))}.
\end{equation}
It has mass \(M_{\R}(Q)\), blows up only at \(a\) on edge \(1\), converges to
zero locally in \(H^1\) on that edge away from \(a\) and its second
component converges to zero in \(H^1(\R_+)\).

Distinct parameter values yield distinct solutions modulo constant phase: if
\((\ell,p)\ne(\widetilde\ell,\widetilde p)\), then no
\(\vartheta\in\R\) can satisfy
\[
 U_{\ell,p}(t)=e^{i\vartheta}U_{\widetilde\ell,\widetilde p}(t)
\]
throughout a common terminal interval.
\end{proposition}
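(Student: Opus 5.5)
The plan is to rerun the construction of \cref{sec:construction,sec:regularity} with the profile \(R_{\ell,p}\) in place of \(R_\lambda\), and then prove distinctness by comparing terminal asymptotics. We first translate in time so that \([T-\kappa/\ell^2,T)\) plays the role of \([0,T_\lambda)\). Both \eqref{eq:nls} and the statements are invariant under \(t\mapsto t-t_*\), since the equation is autonomous.

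\textbf{Step 1: profile and error bounds.} By \cref{lem:euclidean-profile}, \(S_{a,T,\ell,p,0}\) solves the line equation and concentrates at \(x_c(t)=a+\mu_\ell(t)p\). Choose \(\ell_0(p)\) so large that \(\abs{x_c(t)-a}\le\kappa\abs p/\ell\le a/8\) on the whole interval. Then the supports of \(\chi'\), \(\chi''\) and \(\chi^5-\chi\) keep a distance \(d_0/2\) from \(x_c(t)\). The error formula \eqref{eq:error-explicit} holds verbatim, and the proof of \cref{lem:profile-error} gives
\[
\norm{R_{\ell,p}}_\infty\le A\mu_\ell^{-1/2},\qquad \norm{\partial_xR_{\ell,p}}_\infty\le A\mu_\ell^{-3/2},\qquad \norm{E}_{H^1}\le Ae^{-\delta_p/\mu_\ell}.
\]
It also gives the time-derivative bounds \eqref{eq:profile-time}--\eqref{eq:error-time}. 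The phase factor \(\ell\mu_\ell\le\kappa\) enters exactly as \(\lambda\mu\) did before. The constants depend on \(p\) only through \(\abs p\) and \(\delta_p\approx c\,d_0/2\).

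\textbf{Step 2: construction and regularity.} \Cref{lem:terminal-perturbation} applies with \(\lambda=\ell\) and yields \(v_{\ell,p}\) satisfying \eqref{eq:terminal-correction}. \Cref{prop:approximants,prop:approximant-convergence,prop:difference-equation,prop:time-derivative,cor:graph-bounds,prop:domain-persistence,prop:strong-regularity} transfer unchanged, since they used only these abstract bounds, and they give strong regularity. The mass identity is proved as in \cref{prop:mass-concentration}, where dominated convergence now uses \(\chi(a+\mu_\ell(y+p))\to1\). Localization and vanishing on the second edge are proved as in \cref{prop:localization}. Concentration at \(a\) holds because \(x_c(t)\to a\).

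\textbf{Step 3: distinctness.} This is the step I expect to be the main obstacle, because \(\ell\) and \(p\) interact through \(x_c\). Suppose \(U_{\ell,p}=e^{i\vartheta}U_{\widetilde\ell,\widetilde p}\) on a common terminal interval. Since the \(v\)'s vanish in \(H^1\) faster than any power of \(T-t\), we get
\[
\norm{\chi S_{a,T,\ell,p,0}-e^{i\vartheta}\chi S_{a,T,\widetilde\ell,\widetilde p,0}}_{L^2}\to0 .
\]
Rescale by \(y=(x-a)/(T-t)\) and strip off the common quadratic phase \(e^{-i(x-a)^2/(4(T-t))}\), which is identical for both profiles because it does not depend on \(\ell\) or \(p\). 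Taking moduli, we obtain
\[
\ell^{-1/2}Q(y/\ell-p)\to\widetilde\ell^{-1/2}Q(y/\widetilde\ell-\widetilde p)
\]
in \(L^2(\R)\). Since these functions do not depend on \(t\), they must be equal. Comparing maximum values gives \(\ell=\widetilde\ell\), and comparing the locations of the maxima then gives \(p=\widetilde p\). This contradicts \((\ell,p)\ne(\widetilde\ell,\widetilde p)\).

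One technical point needs care. Taking moduli is legitimate because \(\bigl|\,\abs f-\abs g\,\bigr|\le\abs{f-g}\). The cut-off \(\chi\) becomes irrelevant after rescaling, again by dominated convergence. The \(L^2\) errors from \(v\) scale invariantly, so they still tend to zero after rescaling.
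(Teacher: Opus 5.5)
Your proposal is correct. Steps 1 and 2 match the paper: you translate in time, take $\ell\ge\ell_0(p)$ so that the moving center $a+\mu_\ell(t)p$ stays a fixed distance from the supports of $\chi'$, $\chi''$ and $\chi^5-\chi$, and then rerun \cref{lem:terminal-perturbation} and \cref{sec:regularity} with $\lambda$ replaced by $\ell$.

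Your distinctness argument is also correct, but it takes a different route from the paper.

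The paper separates the two parameters.
\begin{itemize}
\item It first recovers $\ell$ from the phase-invariant blow-up rate constant, $\lim_{t\uparrow T}(T-t)\norm{\partial_xU_{\ell,p,1}(t)}_{L^2(I_\eta)}=\ell^{-1}\norm{Q'}_{L^2(\R)}$. This uses the $H^1$ smallness of $v_{\ell,p}$ and the explicit gradient of the profile.
\item With $\ell=\widetilde\ell$, it tests the density $\abs{U_{\ell,p,1}}^2$ against $\psi((x-a)/(\ell(T-t)))$. This gives $Q(\cdot-p)^2=Q(\cdot-\widetilde p)^2$, and it concludes because a nonzero $L^1$ function cannot be periodic.
\end{itemize}

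You instead rescale with the $\ell$-independent variable $y=(x-a)/(T-t)$. A single strong $L^2$ limit of moduli then displays both parameters at once in the fixed function $\ell^{-1/2}Q(y/\ell-p)$. You read $\ell$ off the peak height and $p$ off the peak location.

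What each route buys:
\begin{itemize}
\item Your route is shorter. It needs only $\norm{v_{\ell,p}(t)}_{L^2}\to0$, the inequality $\bigl|\abs f-\abs g\bigr|\le\abs{f-g}$, and the fact that $Q$ has a unique strict maximum at $0$, which is clear from \eqref{eq:Q-explicit}. The height comparison actually uses only $\norm{Q}_\infty$, and the location step could be replaced by the paper's non-periodicity argument.
\item The paper's route ties $\ell$ to an intrinsic dynamical quantity, namely the constant in the pseudoconformal blow-up rate. For $p$ it needs only weak convergence of the rescaled density and no shape property of $Q$ beyond $Q^2\in L^1\setminus\{0\}$.
\end{itemize}
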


\begin{proof}
Work on \(I_\ell=[T-\kappa/\ell^2,T)\).  From
\cref{lem:euclidean-profile}, \(S_{a,T,\ell,p,0}\) is an exact free solution.
Repeated differentiation gives, for \(k\ge0\),
\begin{equation}\label{eq:terminal-profile-bound}
 \abs{\partial_x^kS_{a,T,\ell,p,0}(t,x)}
 \le C_{k,p,\kappa}\mu_\ell(t)^{-1/2-k}
 \exp\!\left(
 -c\left|\frac{x-a}{\mu_\ell(t)}-p\right|\right).
\end{equation}
The cut-off profile in \eqref{eq:terminal-decomposition} lies in
\(D(H_\alpha)\).  Its error is supported where the cut-off varies, a fixed
positive distance from \(a\).  For fixed \(p\), taking \(\ell\) large makes
\(\mu_\ell(t)\abs p\) smaller than half that distance throughout
\(I_\ell\).  Thus \eqref{eq:terminal-profile-bound} gives
\[
 \norm{i\partial_tR_{\ell,p}-H_\alpha R_{\ell,p}
 +F(R_{\ell,p})}_{H^1}
 \le C_p e^{-\delta_p/\mu_\ell(t)}.
\]
After translating the time origin, all hypotheses of
\cref{lem:terminal-perturbation} hold with \(\lambda\) replaced by
\(\ell\).  The change of variables
\(r=\mu_\ell(s)=\ell(T-s)\) supplies the factor \(1/\ell\) and the
contraction produces \eqref{eq:terminal-correction}.  The estimates with
time derivatives used in \cref{sec:regularity} follow in exactly the same
way from \eqref{eq:terminal-profile-bound}; hence the resulting solution is
strong.

The mass and localization proofs of
\cref{prop:localization,prop:mass-concentration} apply after the change of
variables
\[
 z=\frac{x-a}{\mu_\ell(t)}-p,\qquad
 x=a+\mu_\ell(t)(z+p).
\]
They give the asserted mass, blow-up set and edgewise decay.

It remains to identify the parameters.  On any
\(I_\eta=(a-\eta,a+\eta)\) with \(0<\eta<a/2\), differentiation of
\eqref{eq:euclidean-family} and the preceding change of variables give
\[
 \lim_{t\uparrow T}(T-t)
 \norm{\partial_xU_{\ell,p,1}(t)}_{L^2(I_\eta)}
 =\frac1\ell\norm{Q'}_{L^2(\R)},
\]
thus equality up to constant phase forces
\(\ell=\widetilde\ell\).

Assume now that the scales agree.  For every
\(\psi\in C_c^\infty(\R)\), the correction estimate and dominated
convergence yield
\begin{equation}\label{eq:rescaled-density}
 \int_0^\infty
 \psi\!\left(\frac{x-a}{\ell(T-t)}\right)
 \abs{U_{\ell,p,1}(t,x)}^2\dd x
 \longrightarrow
 \int_\R\psi(y)Q(y-p)^2\dd y.
\end{equation}
If two solutions agree up to phase, the left-hand sides agree.  Therefore
\(Q(\,\cdot-p)^2=Q(\,\cdot-\widetilde p)^2\) almost everywhere.  If
\(p\ne\widetilde p\), then the nonzero \(L^1\)-function \(Q^2\) would be
periodic, which is impossible.  Hence \(p=\widetilde p\).
\end{proof}

\subsection*{Remark}

What remains open here is a classification theorem. One would like to determine
whether every minimal-mass solution blowing up on one edge converges, 
up to a constant phase, to one of the profiles in
\cref{prop:terminal-family}, and whether the associated scale and
renormalized center are unique. A possible strategy to address this problem 
is suggested by rigidity arguments for inhomogeneous mass-critical NLS without exact
pseudoconformal symmetry initiated in the seminal paper \cite{RaphaelSzeftel2010}.

\section*{Data availability}
No datasets were generated or analysed
during the current study.

\section*{Statements and Declarations}

\noindent\textbf{Funding.}
M. Buratti acknowledges financial support from
the Istituto Nazionale di Alta Matematica
(INdAM) through a scholarship supporting
his research stay at EPFL.

\end{document}